\newcommand{\blue}{\textcolor[rgb]{0.00,0.00,1.00}}
\newtheorem{teo}{Theorem}[section]
\newtheorem{defi}[teo]{Definition}
\newtheorem{cor}[teo]{Corollary}
\newtheorem{lem}[teo]{Lemma}
\newtheorem{eje}{Example}[section]
\newtheorem{rem}{Remark}[section]
\def\eeD{\end{defi}} \def\beD{\begin{defi}}
\def\beR{\begin{rem}} \def\eeR{\end{rem}}
\def\beL{\begin{lem}} \def\eeL{\end{lem}}
\def\beC{\begin{cor}
  }\def\eeC{\end{cor}}
  \def\beT{\begin{teo}}\def\eeT{\end{teo}}
\def\beXa{\begin{eje}} \def\eeXa{\end{eje}}
\def\BEN{\begin{enumerate}}
\def\EEN{\end{enumerate}}    \def\im{\item} \def\Lra{\Longrightarrow}
\def\bc{\begin{cases}
  }
\def\ec{\end{cases}}
   \def\I{\infty} \def\Eq{\Leftrightarrow}
  \newcommand{\beq}{\begin{eqnarray}
    }
\def\eeq{\end{eqnarray}}
\def\I{\infty}\def\T{\widetilde}
\def\fr{\frac}\def\DFE{disease free equilibrium}
  \newcommand{\be}[1]{$\begin{equation}\label{#1}$}
\newcommand{\ee}{\end{equation}}
\def\bea{\begin{eqnarray*}}\def\ssec{\subsection}
  \def\prf{{\bf Proof:} } 
\def\eea{\end{eqnarray*}} \def\lab{\label}\def\fe{for example }
 \def\eqr{\eqref}
\newcommand{\R}{\mathbb{R}}
\def\al{\alpha}
\def\ga{\gamma} \def\g{\gamma}  \def\de{\delta}  \def\b{\beta}
\def\ep{\epsilon} \def\La{\Lambda} 
 \def\la {\lambda} \def\La {\Lambda}
 \def\QED{\hfill {$\square$}\goodbreak \medskip}
\long\def\symbolfootnote[#1]#2{
\begingroup
\def\thefootnote{\fnsymbol{footnote}}\footnote[#1]{#2}
\endgroup}
\def\fn{\symbolfootnote}
\providecommand{\pp}[1]{\left[#1\right]} 
\providecommand{\pr}[1]{\left(#1\right)} 
\def\bep{\begin{pmatrix}} \def\eep{\end{pmatrix}}
\def\bev{\begin{vmatrix}} \def\eev{\end{vmatrix}}
\newcommand{\red}{\textcolor[rgb]{1.00,0.00,0.00}}
\def\sssec{\subsubsection}
\newcommand{\figu}[3]{
\begin{figure}[H]
\centering
\includegraphics[scale=#3]{#1}
\caption{#2\label{f:#1}}
\end{figure}
}
\def\wrt{with respect to }
\begin{document}

\title{On a three-dimensional and two four-dimensional oncolytic viro-therapy  models  
}

\author{Rim Adenane$^\text{a}$, \and Eric Avila-Vales$^\text{b}$, \and Florin Avram$^\text{c}$, \and Andrei Halanay$^\text{d}$, \and Angel G. C. P\'erez$^\text{b}$ 
}
\maketitle

\begin{center}
	{\small
		$^\text{a}$
		Département des Mathématiques, Université Ibn-Tofail, Kenitra, 14000,
 Maroc\\
		
		$^\text{b}$
		Facultad de Matem\'aticas, Universidad Aut\'onoma de Yucat\'an, Anillo Perif\'erico Norte, Tablaje Catastral 13615, C.P. 97119, M\'erida, Yucat\'an, Mexico\\
		
		$^\text{c}$
		Laboratoire de Math\'{e}matiques Appliqu\'{e}es, Universit\'{e} de Pau, 64000, Pau,
 France\\
		
		$^\text{d}$
		Department of Mathematics and Informatics,  Polytechnic University of Bucharest, 062203, Bucharest, Romania
	}
\end{center}
\bigskip

\begin{abstract}
We revisit here and carry out further works on tumor-virotherapy compartmental models of \cite{Tian11,Tian13,Tian17,Guo}. The results of these papers are only slightly pushed further. However, what is new is the fact that we make public our electronic
notebooks, since we believe that easy electronic reproducibility is crucial in an era in which the role of the software becomes very important.
\end{abstract}

\textbf{Keywords:} Oncolytic viro-therapy, immune response, stability, compartmental  models, bifurcation analysis, electronic reproducibility.
\tableofcontents
\section{Introduction}

{\bf Compartmental models} became famous first in mathematical epidemiology, following the pioneering work of Kermack and McKendrick \cite{KeMcK} on the SIR model; see \cite{Haddad} for  other domains of application, and  for some general theory. In  the last thirty years, they have penetrated also in mathematical virology \cite{Perel,Nowak,WodKom,Bocharov}, and in mathematical oncolytic virotherapy, i.e. in the modeling of the use of viruses for treating tumors \cite{santiago2017fighting,Rockne,pooladvand2021mathematical}.

 We may distinguish between at least two main directions of work in these fields.
 \BEN \im Part of the literature is dedicated to creating models to fit specific
viruses and therapies -- see \fe\ \cite{Perelson,Perelson02,antonio2002oncolytic,smith2003virus,Wod,Pillis,Dalal,Tuckwell,yuan2011stochastic,YuWei,
huang2011complex,chenar2018mathematical}.  The models proposed are   high dimensional, and hence only  analyzable numerically, for particular instances of the parameters.

\im Another part, which is our concern here, is in applying sophisticated mathematical tools, notably the theory of bifurcations for dynamical systems, to ``lower dimensional caricatures" of more complex models.
This  requires  the use of both symbolic software like Mathematica, Maple, or Sagemath, and also of sophisticated numeric continuation and bifurcation packages like MatCont (written in Matlab), PyDSTool (Python),  XPPAuto (C)  -- see \cite{blyth2020tutorial} for a recent review,
and   BifurcationsKit (written in Julia).
\EEN
  In our work below,  we have combined the use of MatCont -- see \cite{ghub2022fourdim}
   with  that of Mathematica -- see \cite{codeM}, and in particular the   package  EcoEvo.
   The notebooks offered on GitHub are an important part of our work, and we attempted to achieve a roughly  one to one correspondence between the equations numbered  in the text and those displayed in Mathematica.

{The origins of the glioma viro-therapy four-compartment $(x,y,v,z)$ model considered here, where  untreated and tumor cells are denoted respectively by $x,y$,  virus cells by $v$, and innate immune cell by $z$, are in \cite{o1999fas,Fried}.\fn[5]{A four-dimensional model considerably more complex was proposed in \cite{senekal2021natural}.}
Interestingly, these papers suggested  a density dependent rate of immune cells, linear up to a threshold $z_0$, and quadratic afterwards.
 ``The first process occurs when $z$ is small and yields a linear clearance; the
second process occurs when $z$ is large and yields a quadratic clearance" \cite[pg 2]{Fried}.} Subsequent papers of Tian  \cite{Tian17},
\cite{Guo}  tackled symbolically the two particular cases $z_0=\I$, $z_0=0$.
For further developments and further outstanding questions in the field, see
\cite{wei21bis,Tian22I,Tian22II}.

Since the quadraticity is hard to ascertain, we propose to study  a unification of  the four-compartment systems studied in \cite{Tian17,Guo}:
\begin{equation} \label{viroeq}
	\begin{aligned}
		\frac{dx}{dt} & = \la x\left(1-\frac{x+y}{K}\right) -\beta xv \\ 
		\frac{dy}{dt} & = \beta xv-\ga   y-\b_y yz                    \\
		\frac{dv}{dt} & = b\ga   y -\beta xv -\de   v -\b_v  vz       \\
		\frac{dz}{dt} & = z(\rho \b_y y-cz^\ep),\ \ep \in \{0,1\},
	\end{aligned}
\end{equation}
where $x$, $y$, $v$ and $z$ represent the populations of uninfected (untreated) tumor cell population, infected tumor cell population, free virus and innate immune cells, respectively.

\beR The invariance of the first quadrant (also called ``essential non-negativity") is immediate since each component $f_i(X)$ of the dynamics  may be decomposed
   as \bea f_i(X)=g_i(X)-x_i h_i(X),\eea
  where $g_i,  h_i$ are polynomials with nonnegative coefficients,  and $x_i$ is the variable whose rate is  given by $f_i(X)$. In fact, under this absence of ``negative cross-efects",  
    even more is true:  the model admits a ``mass-action representation" by the so-called   ``Hungarian lemma" \cite{hars1981inverse,Haddad}, \cite[Thm. 6.27]{Toth}\fn[4]{The previous virology  literature does not seem  to be aware of this result, and offers  direct proofs instead.}  
\eeR

\beR Scaling all the variables by $x= K \T x$, $y= K \T y$, ... has the effect of multiplying all the quadratic terms by $K$, and  one may finally assume $K=1$, at the price of renaming some other parameters. Also, scaling time by a constant allows choosing another parameter as $1$. Below, we will follow occasionally \cite{Tian11,Tian17} in choosing $K=\g=1$, which simplifies a bit the results. \eeR
Figure \ref{f:model_diagram} depicts a schematic diagram of this model. The interpretation of parameters can be seen in Table \ref{tab2}.


\figu{model_diagram}{Schematic diagram of model \eqref{viroeq}. The compartments $x$, $y$, $v$ and $z$ denote uninfected tumor cells, infected tumor cells, free virus and innate immune cells, respectively. Continuous lines represent transfer between compartments. Dashed lines represent viral production or activation of immune cells.}{1}

\begin{table}[H]
    \caption{Interpretation of parameters for model \eqref{viroeq}.}\label{tab2}
    \centering
    \begin{tabular}{|l | l|}
    	\hline
    	Symbol & Description                                                                \\ \hline
    	$\la$       & intrinsic growth rate of uninfected tumor cells \\ \hline
    	$K>0$       & carrying capacity of uninfected tumor cells \\ \hline
    	$\beta >0$       & viral infection rate                        \\ \hline
    	$\b_y$       & rate at which immune system removes infected tumor cells \\ \hline
    	$\ga  >0 $  & lysis rate of infected cells          \\ \hline
    	$b {\ge 1}$  &  virus burst size         \\ \hline
    	$\de  $  &  clearance rate of viruses         \\ \hline
    	$\b_v $  &   rate at which immune system removes viruses        \\ \hline
    	$\rho \b_y:=\b_z >0$  &  proliferation rate of immune cells due to the interaction with infected tumor cells          \\ \hline
    	$c$  &  rate of clearance of immune cells \\ \hline
    \end{tabular}
\end{table}

\beR When $\b_y=\b_v =0$ (the immune system is totally inefficient), the general  model \eqr{viroeq} reduces to a three-compartment  viral model \eqr{Tian11} of \cite{Tian11,Kim}.

The fourth compartment for the immune system was subsequently modeled differently in \cite{Tian17} and in \cite{Guo} (where $K=\infty$). We have unified these two papers by adding the parameter $\ep$, which equals $0$ in \cite{Tian17} and $1$ in \cite{Guo}. \eeR

The \cite{Tian11} three compartment model has been analyzed symbolically up to a point, and it ended with  {a list of open problems,   which awoke our attention, since they seemed to be still open.} Other interesting open problems were raised by the two four-compartment model(for example, the local stability of certain points was only established  in particular cases, numerically).

 We point out now another important open problem, not mentioned in \cite{Tian11}.

 {\bf Q:} Can chaos arise in model \eqr{viroeq}?

 Note that while we do have the right  to hope for the absence of  complicated dynamical behaviors, since we are dealing with a pseudo-linear, essentially non-negative  system,  this is by no means guaranteed. Indeed, complicated dynamics like multiple ``concentric" cycles have been found in \cite{RuanWang}, and  in the parallel ecology literature on {\bf three dimensional food-chains},  chaos is known to occur as well \cite{klebanoff1994chaos,kuznetsov1996remarks,
 kuznetsov2001belyakov,deng2001food,deng2003food,deng2004food,deng2006equilibriumizing,
 deng2017numerical}.

While very interesting and worthy of further investigation,   the virology papers cited above suffer from the lack of providing supporting electronic notebooks. The importance of symbolic and numeric computing in mathematical biology cannot be overstated (see for example \cite{brown2006algorithmic}). 

{\bf Electronic reproducibility}.
As  emphasized already 30 years ago,  the  opportunity we have nowadays    of being able to accompany our pencil calculations with electronic notebooks
``gives  a new meaning to reproducible research" \cite{Repro}. Following efforts of numerous people, for example \cite{buckheit1995wavelab,Donoho},  lots of progress has been achieved, as witnessed by the existence of the platform GitHub. Unfortunately, the percentage of researchers who take the time to tidy their notebooks and make them available on GitHub is still infinitesimal in some fields.

Our main contribution below is in  providing electronic notebooks, where the readers may recover the results of the
previous works of \cite{Tian11,Tian13,Tian17,Guo},  and then modify them as
they please, for analyzing similar models. Note this is a non-trivial task, and it goes in a direction orthogonal to that of most of the current literature. 

{\bf Contents}. We start by revisiting in Section \ref{s:Tian11} the three-dimensional model of \cite{Tian11}, which had been already
 essentially  solved  symbolically. However, with  help from Mathematica, 
 we resolve one of the  problems left open in
 \cite{Tian11}.

In Section
\ref{s:mod} we  introduce   a ``generalized virus" model, geared at unifying previous studies and initiating new directions of research
(as typical in the field, we will not be able to answer all our questions).

Some first results for our general model are then presented
in Sections \ref{s:pob}, \ref{s:bou}.

The particular case of  \cite{Tian17} is revisited in Section \ref{s:Tian17}.

We turn then to the complete viro-therapy and immunity model with logistic growth in Section
\ref{s:Eric}.


\section{Warm-up: the  3 dimensional viral model  \cite{Tian11,Kim}\lab{s:Tian11}}

The three-dimensional tumor-virus model proposed in \cite[(5)]{Tian11}, \cite{Tian13} is:
\begin{equation}\lab{Tian11}
	\begin{aligned}
		\frac{dx}{dt} & =\la x\left(1-\frac{x+y}{K}\right)-\beta xv                             \\
		\frac{dy}{dt} & =\beta xv-\ga y                                                         \\
		\frac{dv}{dt} & =b\ga  y-\beta xv-\de  v, \; {x+y\leq K,\ v\leq \frac{b\ga   K}{\de }}.
	\end{aligned}
\end{equation}

{\bf Brief history}.
A similar three-dimensional $(x,y,v)$ model, with linear growth, and with the term $\beta xv$ present in all the equations
seems to have been first  proposed by Anderson, May and Gupta  \cite{anderson1989non}, as a model for   the interaction of   parasites with host-cells, in particular red blood cells (RBC). Subsequently, this became known as the Novak-May model \cite{Nowak}, and has
 been applied in many other directions, for example  by Tuckwell \& Wan \cite{Tuckwell}, as a model for HIV-1 dynamics. See also  \cite{Tian11}, \cite{Tian13} for a version with delay, \cite{TuanTian} for a stochastic version, , and see \cite{Camara}   for a stochastic version with    ``saturated infection rate" in which $\b$ is replaced by $\b(x,y)=\fr{\b}{x+y+\al}$.

Factorization yields easily the three equilibrium points for the model \eqr{Tian11}. The first two $E_0=(0,0,0)$, $E_{K}=(K,0,0)$ are ``infection free", and the third equilibrium
$$ E_*=\left(\frac{\de }{\beta (b-1)},\ 
\la \fr{\de  }{\b(b-1)}
\frac{K\beta (b-1)-\de }{K\beta \ga (b-1)+\la\de },\ \la \fr{\ga }{\b}\frac{K\beta (b-1)-\de}{K\beta\ga  (b-1)+\la\de }\right)$$
 is interior to the domain.

The explicit eigenvalues of the Jacobian at the first  equilibrium point make it  a saddle point. Similarly, examining the Jacobian shows that the second point is stable when
 \beq \label{R0} R_0:=\frac{\beta K}{\beta K +\de }b\eeq
 is smaller than $1$ and unstable when $R_0>1$, where
$R_0$ is  the famous   ``basic reproduction number".  Note however that the results of
\cite{Van, Van08} do not apply here due to the existence of two ``disease free" equilibria.

The critical
$b$ which makes $R_0=1$ is
\beq \lab{bcr} b_0=1+\fr{\de}{\b K},\eeq
confirming Lemma \cite[Lem. 3.4]{Tian11}.
Writing the third point  as
\begin{equation*}
E_*=\left(\frac{\de }{\beta (b-1)},\ \frac{\de  \la}{\b(b-1)}\fr{\beta K +\de} {K\beta\ga  (b-1)+\la\de }
(R_0-1 ),\ \fr{\ga   \la}{\b}\frac{\beta K +\de }{K\beta\ga  (b-1)+\la\de } (R_0-1 )\right).
\end{equation*}

It is convenient   to rescale time by $\ga$ and the variables by $K$, the net result being that these variables may be assumed to equal $1$ \cite[Sec. 3.1]{Tian11}. The third point simplifies then to
\begin{equation}
E_*=\left(\frac{\de }{\beta (b-1)},\ \frac{\de  \la}{\b(b-1)}\fr{\beta  +\de} {\beta  (b-1)+\la\de }
(R_0-1 ),\ \fr{   \la}{\b}\frac{\beta  +\de }{\beta  (b-1)+\la\de } (R_0-1 )\right)
\end{equation}
and we see that this point enters  the nonnegative domain precisely when $R_0=1$, at $E_*=E_*(b_0)=E_K=(1,0,0)$.


 $E_*$  interior to the invariant domain iff $R_0>1$. Its stability   may be tackled  via the Routh-Hurwitz
 conditions, which,
 {at order three,  amounts to $\bc Tr(J)<0,\\
     Tr(J) M_2(J)<Det(J)<0,\ec$
where $M_2$ is the sum of the second-order principal leading minors of the Jacobian matrix $J$ at $E_*$.}
 Now the first and last inequalities are always satisfied in our case \cite[Thm. 3.7]{Tian11}
 {since,
 $\bc Tr(J):=-1+\frac{\delta(b \beta+\lambda)}{\beta(1-b)}<0\\ Det(J):=(1-R_0)\pr{\frac{\delta \lambda(\beta+\delta)}{\beta(b-1)}}<0\ec$}, and thus the local stability of the point $E_*$ holds iff
\beq \lab{H} H(b):= Det(J)-Tr(J) M_2(J)=-a_3+ a_1 a_2>0,\eeq
where

\begin{align*}
	a_1 & := \frac{\beta(b+b\delta-1) + \delta\lambda}{(b-1)\beta},                                                                                                                                                  \\
	a_2 & := \frac{\delta\lambda\big( (b-1)\beta\left(\beta-1+\delta + b(1-\beta+\delta)\right) + \left((b-1)^2 \beta + b\delta^2 \right) \lambda \big)}{ (b-1)^2 \beta \left( (b-1)\beta + \delta\lambda \right) }, \\
	a_3 & := \delta\lambda\left( 1 + \frac{\delta}{\beta(1-b)} \right).
\end{align*}

\beR As a check, note that $H(b_0)=\lambda (1 +\delta +\beta) (1 +\delta +\lambda +\beta )>0$, and so $E_*$ is stable at the critical point  when $E_{K}$ loses its stability,  as expected.
\eeR

To analyze the sign of $H(b)$,  we note first  that
 its denominator  $(b-1)^3 \beta^2 ((b-1) \beta + \delta \lambda )$ is always positive ({see second cell in  notebook \cite{codeM3}}).

Positivity reduces thus to the positivity of the numerator, which is a fourth order polynomial
\beq \lab{Phi} \Phi(b)=B_4 b^4+B_3 b^3+ B_2 b^2+ B_1 b+B_0,\eeq
and may be investigated via Descartes's rule.

 The  coefficients are
 \begin{eqnarray*}
\bc
B_4:=-\beta ^3, \\
B_3:=\beta ^2 (-\beta  (\delta -3)+\delta  (\delta +3)+\lambda +1), \\
B_2:=\beta  \left(\beta ^2 (2 \delta -3)-3 \beta  (2 \delta +\lambda +1)+\delta  \lambda  (\delta  (\delta +3)+\lambda +1)\right), \\
B_1:=-\beta ^3 (\delta -1)+\beta ^2 \left(-\delta ^2+3 \delta +3 \lambda +3\right)-\beta  \delta  \lambda  (3 \delta +2 \lambda +2)+\delta ^3 \lambda ^2, \\
B_0:= \b (\lambda +1) (\delta  \lambda -\beta ).
\ec
\end{eqnarray*}

  By using $B_4<0,\Phi(b_0)=\frac{\delta ^3 (1 +\lambda ) (1+\beta  +\delta ) (1+\beta  +\delta +\lambda )}{\beta }>0$, \cite[Lem. 3.8]{Tian11} concludes that
 the  fourth order polynomial $\Phi(b)$ must have at least one root  larger  than $b_0$, and one root smaller than $b_0$. Letting $b_H$ denote the smaller root  larger than $b_0$, \cite[Thm 3.9]{Tian11} concludes that  local stability  holds in $(b_0,b_H)$. 
 Also,
 $b_H$ is a candidate for a Hopf bifurcation, by the following elementary Lemma.
 \beL \cite[Lem. 3.10]{Tian11}
 A cubic polynomial $\la^3+ a_{1} \la^{2}+ a_{2} \la+  a_3$ with real coefficients
has a pair of pure imaginary roots if and only if $a_2 > 0$ and $a_3 = a_1 a_2$. When it has
pure imaginary roots, these are given by $\pm i \sqrt{a_2}$, the real root is
given by $-a_1$, and $a_1 a_3 > 0$.
\eeL
\beR  Higher dimension extensions exist as well -- see \cite{farkas1992use,guckenheimer1997computing}. \eeR

\cite{Tian11}  conjectured that $E_*$ may regain its stability at still larger values of $b$, after crossing yet larger roots, and the
 question of whether this may occur: ``What conditions can guarantee that the function $H(b)$ has four, three, and two distinct real zeros?"

 The precise classification of polynomials by their number of roots is a complicated problem \cite{Prod}, and we do not address it below.
 We may answer however the stability question, using the observation in the next remark.

\beR {The real roots smaller than $b_0$ have no importance  (for stability)}, so the real question is whether the fourth-order polynomial $\Phi(b)$ given by \eqr{Phi} may have {\bf more than one
real root larger than} $b_0$.

This can be tackled  by shifting the polynomial to $\Phi(b_0+x)$ and applying Descartes upper bound on the maximum number of positive roots via the number of sign changes in the sequence of coefficients of the shifted polynomial.

\eeR

\beL \lab{l:bH} The polynomial $\Phi(b)$ defined in \eqr{Phi} has precisely one
real root $b_H$ larger than $b_0$.
\eeL

\begin{proof}
	The coefficients of the shifted polynomial $\Phi(b_0+x)$ are
	\bea \bc
	\tilde{B}_0 := \frac{\delta ^3 (\lambda +1) (\beta +\delta +1) (\beta +\delta +\lambda +1)}{\beta }, \\
	\tilde{B}_1 := \delta ^2 \left(\beta  (2 \delta  \lambda +3 \delta +3 \lambda +3)+(\delta +2) \lambda ^2+2 \delta  (\delta +3) \lambda +\delta  (3 \delta +5)+5 \lambda +3\right), \\
	\tilde{B}_2 := \beta  \delta  \left(-\beta ^2+\delta  (\delta +3) \lambda +3 \delta  (\delta +1)+\lambda ^2+4 \lambda +3\right), \\
	\tilde{B}_3 := \beta ^2 (-\beta  (\delta +1)+(\delta -1) \delta +\lambda +1), \\
	\tilde{B}_4 := -\beta ^3. \ec
	\eea
	The first two are positive and the last negative, so in order  to have three roots larger than $b_0$ it is necessary that the third coefficient is negative and the fourth positive.
	Now each of this inequalities admits solutions, but the command
$$Reduce[\{cofi[[4]]>0\&\&cofi[[3]]<0\}]$$
{at the end of the second cell}
in the Mathematica file \cite{codeM3} yields False, telling us that the system of the two inequalities doesn't.
Similarly,
$$FindInstance[\{cofi[[4]]>0\&\&cofi[[3]]<0\},par]$$
fails.
	The diligent reader is invited to provide a ``human proof", but warned  that this seems hard.
\end{proof}

A bifurcation diagram of model \eqref{Tian11} as $b$ varies is illustrated in Figure \ref{f:bif11T}.

\figu{bif11T}{Bifurcation diagram when $b$ varies, when $\lambda = 0.36$, $\beta = 0.11$, $\delta = 0.44$, $K=\gamma=1 \Lra b_0= 1+ \frac{\delta}{\beta}=5, b_H=27.7664$. When $b$ is bigger than the  Hopf bifurcation point $b_H$, there are no stable fix points.}{1}

Figures \ref{f:fig5T} and \ref{f:cy113D} show an illustration of the cycle arising  with the  parameter set above, at a value  $b=28$ slightly larger than   $b_H$, {see also \cite[Figure 7]{Tian13}}.

\begin{figure}[H]
    \centering
    \begin{subfigure}[a]{0.5\textwidth}
    \centering
        \includegraphics[width=\textwidth]{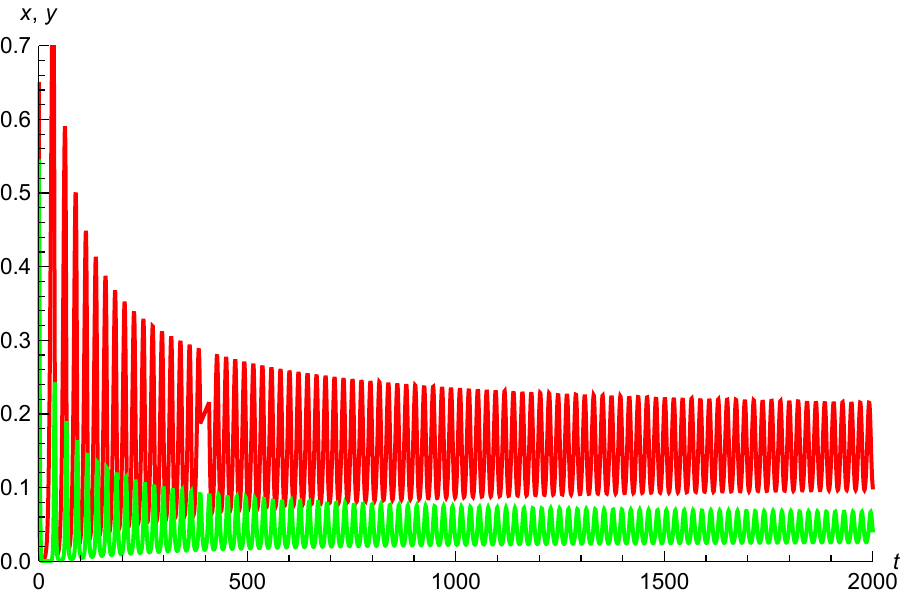}
        \caption{ $(x,y)$-time plot suggests the existence of periodicity.\label{f:fig5T}}
    \end{subfigure}%
    ~
     \begin{subfigure}[a]{0.6\textwidth}
     \centering
        \includegraphics[width=\textwidth]{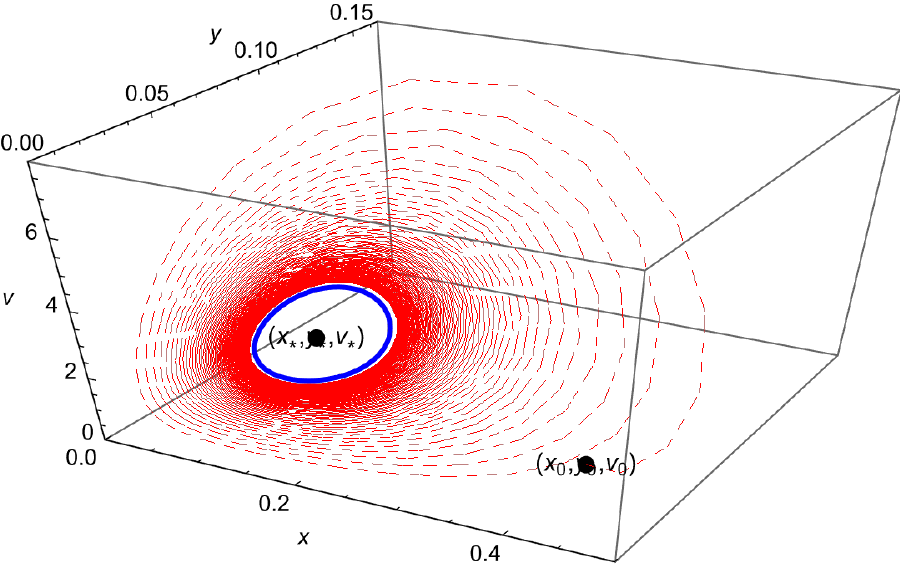}
        \caption{3D-parametric plot showing convergence towards an attracting cycle.  Two paths are displayed, one starting near the unstable fixed point  $E_*=(0.148148, 0.0431317,  2.64672),$ with eigenvalues $\{-1.51022,0.000296187\pm0.298909Im\}$, and one starting far away.\label{f:cy113D}}
    \end{subfigure}
    \caption{Time and 3D parametric plot  when $b=28>b_H=27.7664$.}
    \end{figure}

\section{The four-compartment viro-therapy and immunity model \eqr{viroeq}\lab{s:mod}}

\subsection{Boundedness \lab{s:pob}}

\begin{teo} 

 The epidemiological domain $$\Omega=\left\{(x,y,v,z)\in \mathbb{R}_+^4; x(t)+y(t)\leq K,\ v(t)\leq \frac{b\ga   K}{\de },\ z(t)\leq \zeta\right\},$$
  where
\[\zeta =
\begin{cases}
	\dfrac{\rho \beta b\ga K^2}{\delta\min\{\ga, c\}}, & \text{if } \ep=0; \\
	\frac{\rho \b_y K}{c},                                         & \text{if } \ep=1.
\end{cases}\]
is a positively invariant set.

 \end{teo}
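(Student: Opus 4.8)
The plan is to establish positive invariance by the standard tangency (Nagumo-type) argument: on each bounding hyperplane of $\Omega$ I will check that the vector field of \eqref{viroeq} does not point strictly outward, so that a trajectory starting inside cannot cross. Non-negativity of the four coordinates is already supplied by the essential non-negativity recorded in the Remark, since each face $x_i=0$ has $\dot x_i\ge 0$ (the rate $f_i$ loses its negative part there, because $f_i = g_i - x_i h_i$). It therefore remains to control the three upper constraints $x+y\le K$, $v\le b\gamma K/\delta$, and $z\le\zeta$, and it is convenient to dispatch them in this order, since the later bounds reuse the earlier ones.

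First I would treat the total-tumor bound. Setting $N=x+y$, the virus-transfer terms $\pm\beta xv$ cancel and
$$\dot N=\lambda x\Big(1-\frac{N}{K}\Big)-\gamma y-\beta_y yz,$$
which on the face $N=K$ reduces to $-\gamma y-\beta_y yz\le 0$; hence $N\le K$ is maintained, and in particular $x\le K$ and $y\le K$ everywhere on $\Omega$. Feeding $y\le K$ into the virus equation and discarding the non-negative losses $\beta xv$ and $\beta_v vz$ gives $\dot v\le b\gamma K-\delta v$, so on the face $v=b\gamma K/\delta$ one has $\dot v\le 0$, and that bound is preserved.

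The immune constraint splits into the two regimes, and this is where the work lies. For $\epsilon=1$ the face test is again direct: $\dot z=z(\rho\beta_y y-cz)\le z(\rho\beta_y K-cz)$, which vanishes at $z=\rho\beta_y K/c=\zeta$, so $\dot z\le 0$ there. The genuinely delicate case is $\epsilon=0$, where $\dot z=z(\rho\beta_y y-c)$ can be positive for admissible $y$, so the pointwise tangency check fails outright. The key idea is to pass to the combined variable $W=\rho y+z$, engineered so that the cross terms $\rho\beta_y yz$ cancel:
$$\dot W=\rho\beta xv-\rho\gamma y-cz.$$
Bounding $\beta xv\le \beta K\cdot b\gamma K/\delta$ via the already-established bounds on $x$ and $v$, and using $\rho\gamma y+cz\ge \min\{\gamma,c\}\,W$, yields the linear differential inequality
$$\dot W\le \rho\,\frac{\beta b\gamma K^2}{\delta}-\min\{\gamma,c\}\,W,$$
whose comparison (Gr\"onwall) solution is dominated by $\zeta=\frac{\rho\beta b\gamma K^2}{\delta\min\{\gamma,c\}}$, and since $z\le W$ the stated bound on $z$ follows.

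The main obstacle is precisely this $\epsilon=0$ step: because the $z=\zeta$ boundary is not flow-invariant under a pointwise normal test, one cannot argue face by face and must instead introduce the auxiliary function $W$ and reason through a differential inequality. I would also flag the one point of care it raises: the comparison argument controls $W$ — hence $z$ — once $W$ lies below $\zeta$, so the natural exactly-invariant object for $\epsilon=0$ is the region cut out by $\rho y+z\le\zeta$, and $\Omega$ plays the role of the absorbing (ultimately invariant) set, with the $z$-coordinate statement read as the asymptotic bound $\limsup_{t\to\infty} z(t)\le\zeta$; the other three faces are invariant in the strict pointwise sense.
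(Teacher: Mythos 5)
Your proof is correct and follows essentially the same route as the paper: the same cancellation of $\pm\beta xv$ when adding the first two equations, the same differential inequality for $v$, the same logistic comparison for $z$ when $\ep=1$, and the same auxiliary combination for $\ep=0$ (your $W=\rho y+z$ is just $\rho$ times the paper's $w=y+\frac{1}{\rho}z$). Your closing caveat is in fact a refinement the paper glosses over: for $\ep=0$ both arguments really deliver only $\limsup_{t\to\infty}z(t)\le\zeta$ (the constraint $z\le\zeta$ is absorbing rather than pointwise invariant, since $\dot z>0$ is possible on that face when $y>c/(\rho\b_y)$), whereas the other constraints are invariant in the strict Nagumo sense.
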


\beR The first conditions on $x,y,v$ appear already in \cite[Lem. 1]{Tian17}.
\eeR

\begin{proof}
	 By adding the first two equations in \eqref{viroeq}, one obtains
	\begin{align*}
		\dot{x}+\dot{y} & = \la x\left(1-\frac{x+y}{K}\right)-\beta xv+\beta xv-\b_y yz-\ga   y \\
		                & \leq \la x \left(1-\frac{x+y}{K}\right).
	\end{align*}
	By a comparison argument we obtain that $\limsup_{t\to\infty}x(t)+y(t)\leq K$. This implies that for all $\varepsilon>0$ there is $t_1>0$ such that if $t>t_1$ then $x(t)\leq K+\varepsilon$ and $y(t)\leq K+\varepsilon$. Then, for $t>t_1$ we have
	$$\dot{v}=b\ga   y-\beta xv-\de  v-\b_v  zv\leq b\ga   (K+\varepsilon)-\de  v,$$
	from which we deduce that $\limsup_{t\to\infty}v(t)\leq \frac{b\gamma   K}{\de }$.
	\\ Lastly, for the boundedness of $z$, we will divide the proof in two cases.
	Let $\varepsilon_2>0$ and take $t_2>0$ such that
	$$x(t)+y(t) \leq K+\varepsilon_2
	\text{\quad and\quad}
	v(t) \leq \frac{b\gamma K}{\de}+\varepsilon_2
	\text{\quad for\quad}t>t_2.$$
	Consider first the case $\epsilon=0$. Let $w(t)=y(t)+\fr 1{\rho}z(t)$. Then, for $t>t_2$, we have
	\begin{align*}
		\dot{w} & = \dot{y}+\fr 1{\rho}\dot{z}                                                                                             \\
		        & = \b xv - \gamma y - \frac{c\b_y}{\b_z}z                                                                                      \\
		        & \le \b\left(K+\varepsilon_2\right)\left(\frac{b\gamma K}{\de}+\varepsilon_2\right) - \sigma\left(y+\fr 1{\rho}z\right),
	\end{align*}
	where $\sigma:=\min\{\ga, c\}$. It follows that
	\[\limsup_{t\to\infty} \fr 1{\rho}z(t)
	\leq \limsup_{t\to\infty} w(t)
	\leq \dfrac{\beta b\gamma K^2}{\delta\sigma},
	\]
	and finally
	$$\limsup_{t\to\infty}z(t) \le \dfrac{s\beta b\gamma K^2}{\b_y\delta\min\{\ga, c\}}.
	$$
	Lastly, in the case when $\epsilon=1$, we obtain
	\begin{align*}
		\dot{z} & = \b_z yz - cz^2                                                                            \\
		        & \leq \b_z\left(K+\varepsilon_2\right)z - cz^2                                              \\
		        & = \b_z\left(K+\varepsilon_2\right)z\left(1-\frac{c}{\b_z\left(K+\varepsilon_2\right)}z\right)
	\end{align*}
	for $t>t_2$. From this, we deduce that $\limsup_{t\to\infty}z(t)\leq \frac{\b_z K}{c}$.
\end{proof}

\subsection{Boundary equilibria and their stability
 \lab{s:bou}}
Factoring  the last and first equilibrium equations yields four points  which have either $z=0$ or $x=0$:
\begin{teo}
 The  fixed points with $z=0$ or $x=0$ are respectively:
 \begin{itemize}
\item $E_0=(0,0,0,0)$
\item $E_K=(K,0,0,0)$
\item \beq \label{Es} E_*=
\left(\frac{\de }{\beta (b-1)},
y_*:=\frac{\de  }{\beta (b-1)}\frac{b   \lambda  (R_0-1) }
{\lambda(b-  R_0) +(b-1) \gamma  R_0  }, \fr{\ga (b-1)}{\delta} y_*,0\right).\eeq
\item $E_N=\left(0, y_e=\fr c s, -y_e\frac{b  \gamma  \mu _y}{\gamma   \mu _v-\delta   \mu _y}, -\frac{\gamma }{\mu _y}\right)$ when $\ep=0$ and $(0,-y_e \frac{\ga  }{\mu},y_e\frac{\ga  ^2b}{\ga  \b_v -\de \mu},-\frac{\ga  }{\b_y}  )$ when $\ep=1$.  This is always outside the domain and will be ignored from now on.

\end{itemize}

\label{Viro_teo_413}

\end{teo}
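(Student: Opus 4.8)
The plan is to impose the stationarity conditions $\dot x=\dot y=\dot v=\dot z=0$ on \eqref{viroeq} and to exploit the factored structure of the first and last equations. The last equation factors as $z(\rho\b_y y-cz^\ep)=0$, so every equilibrium satisfies either $z=0$ or $\rho\b_y y=cz^\ep$; the theorem concerns exactly the branch $z=0$ together with the branch $x=0$. Likewise, the first equation factors as $x\bigl[\lambda\bigl(1-\frac{x+y}{K}\bigr)-\beta v\bigr]=0$, so on each branch either $x=0$ or the logistic bracket vanishes. I would organize the whole argument around these two dichotomies.

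First I would treat the branch $z=0$. Setting $z=0$ kills the fourth equation identically and reduces the first three equations of \eqref{viroeq} to the three-dimensional system \eqref{Tian11}. Hence its equilibria, lifted by appending $z=0$, are precisely the three points already exhibited in Section \ref{s:Tian11}: when $x=0$ the second and third equations force $y=v=0$, giving $E_0$; when the logistic bracket vanishes but $y=0$ one recovers $E_K=(K,0,0,0)$; and when $y\ne 0$, eliminating $v$ between $\beta x v=\gamma y$ and $(b-1)\gamma y=\delta v$ yields $x=\de/(\beta(b-1))$, and substituting back into the logistic bracket produces $E_*$. The only genuinely computational point here is rewriting the resulting $y$-coordinate into the normalized $R_0$-form displayed in \eqref{Es}, which is a routine manipulation using the definition \eqref{R0} of $R_0$.

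Next I would treat the branch $x=0$ with $z\ne 0$ (the case $z=0$ having just been covered). With $x=0$ the first equation holds automatically, and the second reduces to $-y(\gamma+\b_y z)=0$. If $y=0$ then the fourth equation gives $-cz^{\ep+1}=0$, forcing $z=0$ and returning us to $E_0$; so for a new point we need $y\ne 0$ and therefore $z=-\gamma/\b_y$. The remaining scalar constraint $\rho\b_y y=cz^\ep$ then pins down $y$ (namely $y=c/s$ when $\ep=0$ and $y=-y_e\,\gamma/\b_y$ when $\ep=1$), and the third equation, solved as $v=b\gamma y/(\delta+\b_v z)$, produces the stated $v$-coordinate in each case. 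This is the point $E_N$.

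Finally, the assertion that $E_N$ always lies outside the domain is immediate rather than an obstacle: since $\b_y>0$ and $\gamma>0$, its $z$-coordinate equals $-\gamma/\b_y<0$, so $E_N\notin\R_+^4$ regardless of the remaining parameters. The only steps requiring care are the bookkeeping of the case split --- in particular checking that the degenerate sub-cases ($y=0$, or several coordinates vanishing simultaneously) collapse back onto $E_0$ and produce no spurious equilibria --- and the algebraic simplification of $y_*$ into the $R_0$ form; neither is conceptually difficult, so I expect no serious obstruction.
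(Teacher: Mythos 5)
Your proposal is correct and follows exactly the route the paper indicates (the paper gives no written proof beyond the remark that ``factoring the last and first equilibrium equations yields four points''): you factor $\dot z=0$ and $\dot x=0$, recover the three $z=0$ equilibria from the reduced system \eqref{Tian11}, and solve the $x=0$, $z=-\gamma/\b_y$ branch to get $E_N$, whose negative $z$-coordinate places it outside $\R_+^4$. All the coordinate computations, including the rewriting of $y_*$ in the $R_0$ form of \eqref{Es}, check out.
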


\beR The first three fixed points appear already as solutions of the three-dimensional system \cite[(5)]{Tian11} obtained when the immune system is  inexistent.

Indeed, with $K=1$, $\ga=1$ (after rescaling), as in section \cite[Sec. 3.1]{Tian11}, the second and third point become $(1,0,0)$  and
$\left(\frac{\de }{\beta (b-1)},
\la\fr{\de  }{\b(b-1)}
\frac{\beta (b-1)-\de }{\beta  (b-1)+\la\de },\fr{\ga   \la}{\b}\frac{\beta (b-1)-\de}{\beta  (b-1)+\la\de }\right)$ --see \cite[Sec. 3.2]{Tian11}.

The fourth fixed point appeared also already, in the linear growth problem of \cite{Guo}.
\eeR

The Jacobian matrix of the system when $K=\ga=1$ is given by
$$J(x,y,v,z)=\left(
\begin{array}{cccc}
 \lambda -\frac{\lambda  (2 x+y)}{K}-\beta  v & -\frac{\lambda  x}{K} & -\beta  x & 0 \\
 \beta  v & -\gamma -z \mu _y & \beta  x & -y \mu _y \\
 -\beta v & b \gamma  & -\delta -z \mu _v+\beta  (-x) & -v \mu _v \\
 0 & s z & 0 & s y-c (\epsilon +1) z^{\epsilon } \\
\end{array}
\right).$$

At the boundary fixed points, the Jacobian has a block-diagonal form,  which simplifies the  stability analysis.

\begin{teo} $E_0$ is always a saddle point.\end{teo}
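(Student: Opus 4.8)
The plan is to linearize at $E_0$ and exploit the triangular structure of the Jacobian noted just above the statement. Substituting $x=y=v=z=0$ into $J(x,y,v,z)$ annihilates every entry lying strictly above the main diagonal, since each of those entries is proportional to one of $x,y,v,z$; the matrix collapses to the lower-triangular form
\[
J(E_0)=\begin{pmatrix}
\la & 0 & 0 & 0\\
0 & -\ga & 0 & 0\\
0 & b\ga & -\de & 0\\
0 & 0 & 0 & \star
\end{pmatrix},
\]
whose eigenvalues are read off immediately as the diagonal entries $\la,\ -\ga,\ -\de,\ \star$.

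Next I would evaluate the corner entry $\star=\big(\b_z y-c(\ep+1)z^{\ep}\big)\big|_{E_0}$, which equals $-c$ when $\ep=0$ and $0$ when $\ep=1$. In either case the spectrum of $J(E_0)$ contains the intrinsic growth rate $\la>0$ together with $-\ga<0$ and $-\de<0$ (recall $\ga,\de>0$, with $\ga=1$ after the rescaling). Hence $J(E_0)$ always has at least one eigenvalue of each sign, so $E_0$ possesses both a nontrivial unstable manifold and a nontrivial stable manifold: it is a saddle, independently of the parameter values. This is the entire content of the claim and requires no Routh--Hurwitz work, precisely because the matrix is triangular.

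The only point demanding care is the case $\ep=1$, where $\star=0$ makes $E_0$ non-hyperbolic; the main obstacle is to confirm that this zero eigenvalue does not turn $E_0$ into a degenerate attractor and so invalidate the word ``saddle.'' I would dispatch this with a one-line centre-manifold observation: the eigenvalue $0$ corresponds to the $z$-direction, and the $z$-axis $\{x=y=v=0\}$ is invariant with reduced dynamics $\dot z=-cz^2\le 0$, so the centre direction is (weakly) attracting rather than repelling. Since the unstable manifold tangent to the $x$-axis (eigenvalue $\la>0$) persists regardless of $\ep$, $E_0$ remains unstable with a nontrivial stable set in both cases, which is exactly the asserted saddle behaviour.
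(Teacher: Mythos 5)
Your proof is correct and follows essentially the same route as the paper: evaluate the Jacobian at the origin, observe that it is lower triangular with diagonal entries $\la,-\ga,-\de$ and $-c$ (if $\ep=0$) or $0$ (if $\ep=1$), and conclude that the coexistence of the positive eigenvalue $\la$ with the negative eigenvalues $-\ga,-\de$ makes $E_0$ a saddle for all parameter values. Your additional centre-manifold remark for the non-hyperbolic case $\ep=1$ (the invariant $z$-axis with $\dot z=-cz^2\le 0$) goes slightly beyond the paper, which simply reads off the signs of the eigenvalues, and is a welcome clarification of why the zero eigenvalue does not spoil the saddle structure.
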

\begin{proof}
Since
$$J(E_0)=\left(\begin{matrix}
\lambda & 0 & 0 & 0\\
0 & -\ga   & 0 & 0\\
0 & b\ga   & -\de   & 0\\
0 & 0 & 0 & c_e
\end{matrix} \right),$$ where $c_e=c \left(-0^{\epsilon }\right) (\epsilon +1)=\bc -c &\ep=0\\0&\ep=1\ec$, it
 has always  one positive eigenvalue  $\lambda>0,$ and at least two negative eigenvalues  $-\ga , -\de$.
\end{proof}

\subsubsection{Stability of the boundary fixed point $E_K$ \label{s:Sta}}

Here we will prove  the existence of a stability transition of $E_K$ when $R_0=1$,
in the spirit of  the ``$R_0$ alternative". The  proof is standard when $\ep=0$\fn[4]{and global stability  holds as well under the assumptions $R_0<1$ and $y_e > 1$ \cite[Prop. 4]{Tian17}},
but the result is more delicate when $\ep=1$, since the Jacobian is singular at $E_K$:
$$\left(
\begin{array}{cccc}
 -\lambda  & -\lambda  & -\beta  & 0 \\
 0 & -z \mu _y-1 & \beta  & 0 \\
 0 & b & -\beta -\delta -z \mu _v & 0 \\
 0 & s z & 0 & -c (\epsilon +1) z^{\epsilon } \\
\end{array}
\right)$$

\begin{teo} When $\ep=1$, $E_K=(K,0,0,0)$ is
\BEN \im  unstable if $R_0>1 \Eq  b > b_0$;
\im  stable if $R_0<1,   \ep=0$;

\im when $R_0<1,   \ep=0$, the equilibrium $E_K$ is locally stable and
 local asymptotic stability holds
with respect to $(x,y,v)$, i.e. every solution that starts close enough to $E_K$
satisfies $\lim\limits_{t\to\infty} (x, y, v)(t)=(K,0,0)$.
\EEN
\end{teo}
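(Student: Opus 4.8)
The plan is to read stability off the block structure of $J(E_K)$. With $K=\ga=1$ and $z=0$, the displayed Jacobian has a vanishing fourth row and fourth column when $\ep=1$: the $(4,4)$ entry $-c(\ep+1)z^{\ep}$ equals $0$ (since $0^{1}=0$) and the $(4,2)$ entry $sz$ equals $0$. Hence the spectrum of $J(E_K)$ is $\{0\}$ together with the spectrum of the upper-left block
\[
M=\begin{pmatrix} -\la & -\la & -\b \\ 0 & -1 & \b \\ 0 & b & -\b-\de \end{pmatrix}.
\]
Because $M$ is block upper triangular, its eigenvalues are $-\la<0$ and the two eigenvalues of the $(y,v)$-subblock $\left(\begin{smallmatrix} -1 & \b \\ b & -\b-\de \end{smallmatrix}\right)$, whose trace is $-(1+\b+\de)<0$ and whose determinant is $\b+\de-\b b=(\b+\de)(1-R_0)$, using $R_0=\frac{\b b}{\b+\de}$.

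For the instability claim, if $R_0>1\Eq b>b_0$ the determinant is negative, so the $(y,v)$-subblock has one positive and one negative real eigenvalue; $E_K$ is then a saddle and unstable, the extra zero eigenvalue being irrelevant once a positive one is present. (For $\ep=0$ the $(4,4)$ entry is instead $-c$, so $J(E_K)$ is hyperbolic and the same sign count yields the clean $R_0$-alternative by linearization; this is the standard case.)

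The delicate case is $\ep=1$, $R_0<1$: now $M$ is Hurwitz, but $J(E_K)$ carries a single zero eigenvalue along the $z$-direction, so linearization cannot decide stability. I expect this degeneracy to be the main obstacle. Rather than compute a center manifold, I would exploit the sign of the coupling. In $\dot y=\b xv-y-\b_y yz$ and $\dot v=by-\b xv-\de v-\b_v vz$ the terms $-\b_y yz$ and $-\b_v vz$ are $\le0$ on the invariant orthant $\Omega$, so $z$ can only accelerate the decay of $(y,v)$. I would then test $V=c_1y+c_2v$ with $c_1,c_2>0$: a direct computation gives
\[
\dot V=(-c_1+c_2 b)\,y+\big((c_1-c_2)\b x-c_2\de\big)\,v-(c_1\b_y y+c_2\b_v v)\,z,
\]
where the last group is $\le0$. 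For $x$ in a small neighborhood of $1$ the two remaining coefficients are strictly negative precisely when $c_2 b<c_1<c_2\frac{\b+\de}{\b}$, an interval that is nonempty exactly because $\b b<\b+\de\Eq R_0<1$. Fixing such $c_1,c_2$ makes $V$ a strict Lyapunov function on a small forward-invariant box, forcing $(y,v)\to(0,0)$.

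It remains to slave $x$ and record the fate of $z$. Once $(y,v)\to0$, the $x$-equation $\dot x=\la x(1-x)-\la xy-\b xv$ is the scalar logistic flow perturbed by a vanishing input, so $x\to K=1$ by a standard cascade argument; this gives the convergence $(x,y,v)(t)\to(K,0,0)$ and local stability of $E_K$ inside $\Omega$. For $z$, once $y\le\eta$ one has $\dot z\le z(\b_z\eta-cz)$, whence $\limsup_{t\to\infty}z(t)\le\b_z\eta/c$, and letting $\eta\to0$ yields $z\to0$ within the orthant. Finally I would stress why the conclusion is phrased only \emph{with respect to} $(x,y,v)$: on the center direction the reduced dynamics behaves like $\dot z\approx-cz^2$, attracting from $z>0$ but repelling from $z<0$, so genuine four-dimensional Lyapunov stability fails and only the stated one-sided, $(x,y,v)$-asymptotic statement is sharp.
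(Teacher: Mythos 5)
Your proposal is correct, and for the routine parts (reading the spectrum off the block structure of $J(E_K)$, the determinant $\ga(\b K+\de)(1-R_0)$ of the $(y,v)$-block, instability for $R_0>1$, hyperbolic stability for $\ep=0$) it coincides with the paper's argument. For the delicate case $\ep=1$, $R_0<1$, however, you take a genuinely different route. The paper handles the zero eigenvalue in the $z$-direction by appealing to the Lyapunov--Malkin theorem, conceding that one of its hypotheses fails ($f_4(0,0,0,z)=-cz^2\not\equiv0$) and asserting that the proof nevertheless survives; it explicitly gives up on the asymptotics of $z$. You instead avoid center-manifold machinery altogether: you build an explicit linear Lyapunov function $V=c_1y+c_2v$ on the invariant orthant, observe that the immune terms $-\b_y yz$, $-\b_v vz$ only help, and note that the window $c_2b<c_1<c_2(\b+\de)/\b$ is nonempty precisely when $R_0<1$; a cascade argument then slaves $x$, and the differential inequality $\dot z\le z(\b_z\eta-cz)$ even recovers $z\to0$ on the orthant, which the paper's argument does not deliver. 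The one step you should tighten is the forward invariance of the ``small box'': you need to argue (e.g.\ by an exponential-decay estimate on $u=x-K$ driven by the integrable forcing $O(V)$) that $x$ cannot leave the neighbourhood of $K$ on which the coefficient of $v$ in $\dot V$ stays negative before $V$ has decayed. Your closing observation that genuine four-dimensional Lyapunov stability fails for $z<0$ (since $\dot z\approx -cz^2$ drives negative $z$ away, indeed to finite-time blow-up) is a useful sharpening: it makes precise that the paper's ``simple local stability'' can only be meant relative to the nonnegative orthant.
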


\begin{proof}
The Jacobian at $E_K$ is $$ J(E_K)= \left(
\begin{array}{cccc}
 -\lambda  & -\lambda  & -K \beta  & 0 \\
 0 & -\gamma  & \beta  K & 0 \\
 0 & b \gamma  & -K \beta  -\delta  & 0 \\
 0 & 0 & 0 & 0 \\
\end{array}
\right).$$

The  block diagonal structure puts in evidence  an   upper $1\times1$ block with  negative eigenvalue  $-\lambda$, and a  lower $1\times1$ block with   eigenvalue equal to $0$. The remaining  middle $2\times2$  diagonal block  has  determinant   $\ga(\beta K +\de)(1-R_0)$, implying  eigenvalues of different sign when
$R_0>1$, yielding the first part of the result.

\im The trace is always negative, implying  two negative eigenvalues from the middle block when
$R_0<1$.
Together with the last eigenvalue $-c$, this yields  the second result.

\im When
$R_0<1, \ep=1$, our system  is  in the delicate situation
covered by the Lyapunov-Malkin Theorem (since the fixed point is never hyperbolic and the  Hartman-Grobman Theorem does not apply),  \cite[Ch. IV, \S 34]{Malkin}, \cite{Marsden}. Even if one condition of this theorem is not fulfilled for the shifted vector $(x_1=x-K,y,v,z)$, namely $f_4 (0,0,0,z)=-c z^2$ is not zero for every $z$ as required , the proof still yields   simple local stability due  to the specific form of $f_4 (x_1, y, v, z)= z (s  y-c z^2)$. The asymptotic behavior of $z$ cannot be inferred  using the original proof any more.

\end{proof}

\subsubsection{Stability of the boundary fixed point $E_*$ \label{s:Sta}}

At $E_*$, the Jacobian has a block-tridiagonal form:

\begin{teo}  If $R_0=  \frac{\beta K}{\beta K +\de }b>1$ (same value as in the three-dimensional model), $E_*$ is  non-negative, and an unstable equilibrium point. \end{teo}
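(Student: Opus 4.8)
The plan is to establish the two assertions separately: first that $E_*$ lies in the closed positive orthant (indeed interior in the $(x,y,v)$ coordinates) exactly because $R_0>1$, and then that its linearization carries an eigenvalue with positive real part. The instability argument will hinge on the vanishing of the fourth coordinate of $E_*$, which makes the Jacobian block triangular: a $3\times3$ block acting in the invariant hyperplane $\{z=0\}$, together with one transverse eigenvalue governing the immune direction.

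For non-negativity I would simply read off the three inequalities that matter. Since $R_0=\frac{\beta K}{\beta K+\delta}\,b>1$ forces $b>b_0=1+\frac{\delta}{\beta K}>1$ (see \eqref{bcr}), the factor $b-1$ is positive, so $x_*=\frac{\delta}{\beta(b-1)}>0$. Because $\frac{\beta K}{\beta K+\delta}<1$ we always have $R_0<b$, hence the denominator $\lambda(b-R_0)+(b-1)\gamma R_0$ in $y_*$ is a sum of strictly positive terms, while the numerator $b\lambda(R_0-1)$ is positive precisely when $R_0>1$. Thus $y_*>0$, whence $v_*=\frac{\gamma(b-1)}{\delta}y_*>0$ and $z_*=0$, giving $E_*\in\mathbb{R}_+^4$. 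I would note that the first three coordinates coincide with the interior equilibrium of the three-dimensional model \eqref{Tian11}, so this step merely recovers what was already recorded there.

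For instability I would evaluate the Jacobian of \eqref{viroeq} at $E_*$. Since $z_*=0$, the bottom row collapses to $(0,0,0,J_{44})$ with $J_{44}=sy_*-c(\epsilon+1)z_*^{\epsilon}$, so the matrix is block upper-triangular for the splitting $(x,y,v)\mid z$. Consequently the characteristic polynomial factors as the characteristic polynomial of the top-left $3\times3$ block times $(J_{44}-\mu)$, and that $3\times3$ block is exactly the Jacobian of the planar system \eqref{Tian11} at its interior equilibrium. The transverse eigenvalue is therefore $J_{44}$. When $\epsilon=1$ this equals $J_{44}=sy_*>0$, because $s>0$ and $y_*>0$, which gives a strictly positive real eigenvalue and hence instability, irrespective of the signs of the real parts of the three eigenvalues inherited from the planar block.

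The main obstacle is the case $\epsilon=0$, where the transverse eigenvalue becomes $J_{44}=sy_*-c$, whose sign is genuinely parameter-dependent. Instability then no longer follows from the immune direction alone, since the three eigenvalues of the $3\times3$ block are precisely those of the three-dimensional model, which by the earlier Routh--Hurwitz analysis all have negative real part for $b\in(b_0,b_H)$. So for $\epsilon=0$ one must either invoke the transversality condition $sy_*>c$, equivalently $y_*>c/s=y_e$ (the infected-cell level at $E_*$ exceeds the immune activation threshold), to recover a positive eigenvalue, or else argue separately that $E_*$ cannot be attracting. I expect this threshold comparison of $y_*$ against $y_e$ — reflecting the constant-clearance immune dynamics of \cite{Tian17} — to be the delicate point, in contrast with the linear-clearance model $\epsilon=1$, where the immune compartment invades $E_*$ unconditionally.
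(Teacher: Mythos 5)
Your proposal follows essentially the same route as the paper: non-negativity of $E_*$ from $R_0>1$ (via $b>1$ and $R_0<b$), and instability from the block-triangular structure of the Jacobian forced by $z_*=0$, whose transverse eigenvalue is $s y_*>0$ when $\epsilon=1$ but only $s y_*-c$ when $\epsilon=0$. Your observation that the $\epsilon=0$ case is not settled by this argument and requires the threshold comparison $y_*>y_e$ matches the paper exactly, which likewise records this only as a sufficient condition and defers the full $\epsilon=0$ stability analysis to a later section.
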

\begin{proof} The Jacobian is given by

\beq \label{J4s}J(E_*)=\left(\begin{matrix}
	a_1  & a_2  & a_3 & 0                                                                  \\
	a_4  & a_5  & a_6 & a_7                                                                \\
	-a_4 & b\ga & a_8 & a_9                                                                \\
	0    & 0    & 0   & c_e+\rho \b_y y_*
\end{matrix} \right),\eeq
where $c_e=\bc -c &\ep=0\\0&\ep=1\ec$, where $
y_*=\frac{\de  }{\beta (b-1)}
\frac{b   \lambda  (R_0-1) }{  \lambda(b-  R_0) +(b-1) \gamma  R_0  }$
is the $y$ coordinate of the fixed point $E_*$ \eqr{Es},  and where $a_1,a_2, a_3, a_4, a_5, a_6, a_7, a_8$ and $a_9$ have complicated expressions, given in the first cell in
 \cite
 {codeM40}.

When $\ep=1$, using   $b>1$ and
 $1< R_0< b $ implies that  $J(E_*)$ has at least one positive eigenvalue and therefore is unstable.

When $\ep=0,$ we still get a sufficient  condition for instability
\beq \label{sye} y_*>y_e=\fr c{\rho \b_y},\eeq
but this condition is not necessary, since a second instability interval (due to the other three eigenvalues) may appear -- see Figure \ref{f:BiifT17}. The full analysis is  reported to  section \ref{s:SEs}.

 \end{proof}

The interior equilibria of system \eqref{viroeq} will be studied in the following sections for two special cases of the model. From now on, we will use mainly the rescaled equations with
$K=\ga =1$.

\section{The four-dimensional viro-therapy  model with  $\ep=0$ \cite{Tian17} \lab{s:Tian17}}
  The dynamical system when  $K=\ga =1$  and $\epsilon=0,$ is:
  \begin{equation} \label{xT}
  	\begin{aligned}
  		\frac{dx}{dt} & = \la x\left(1-x-y\right) -\beta xv \\
  		\frac{dy}{dt} & = \beta xv-\b_y yz- y               \\
  		\frac{dv}{dt} & = b y-\beta xv-\b_v  vz-\de   v     \\
  		\frac{dz}{dt} & = z(\beta_z y-c),
  	\end{aligned}
  \end{equation}
with \beq \lab{v} {y\leq 1-x}, \; v \leq \min\left[\frac{\lambda}{\beta}(1-y_e), \frac{y_e}{ \delta} (b-1)\right],\quad y_e=\fr c {\b_z}.\eeq

Besides the  three equilibrium points $E_0, E_{K}, E_*$  of the three-dimensional viral system (extended by the values $z=0$),  we may have up to two new  equilibria with $z>0$, both having $y=y_e$, provided that $y_e \le 1$ --see below.

\subsection{Interior equilibria \label{ENP17}}

{
When $z\neq 0$, from the last equation in \eqr{xT} we have $y=y_e=\frac{c}{\b_z}$, and by substitution into the first equation of \eqr{xT}, we get
$$x=1-y_e-\frac{v \beta}{\lambda}=:h(v).$$
If $y_e >1,$ there are no equilibrium points with $z > 0$.
{When $y_e \le 1$, then $x$ positive requires $v\leq \frac{\lambda}{\beta}(1-y_e)
$}

Moreover, substituting this into the sum of the second and third equations in \eqr{xT} yields
$$z= \frac{y_e(b-1)-v \delta}{y_e \b_y  +v \b_v }=:\frac{f(v)}{g(v)},$$
which is positive if and only if $v\leq \frac{c(b-1)}{\b_z \delta},$
and the second equilibrium equation in \eqr{xT} implies
\begin{eqnarray*}
&&P(v)= v \beta h(v)-y_e\left(   1+\b_y  \frac{f(v)}{g(v)} \right)
: = v^3 + a_2 v^2+ a_1 v +a_0 = 0, \\&&\bc  a_2:=y_e(1+\frac{\lambda}{\beta})-\frac{\lambda}{\beta}=b_0 y_e -b_0 +1, \\
a_1:= \fr {\la} {\b^2} y_e \pp{1+ \fr{\beta \mu _y}{  \b_v}(  y_e -b_0) },\\
a_0:= \frac{b c^2 \lambda  \mu _y}{\beta ^2 \b_z^2 \b_v}. \ec
\end{eqnarray*}

This third order equation determining  $v$
may have at most two sign changes, attained when $y_e \le \max [(b_0-1)/b_0,b_0- \fr{\b_v }{\b \b_y }]$, and so we may have either  $0$, $1$ or $2$  positive endemic equilibria, denoted by  $E_{im},E_+$; there may also be  a  solution $E_-$ with negative $v$, which is of no concern to us.
 All situations may occur --see the bifurcation diagram in Figure \ref{f:BiifT17}, depending on the sign of the discriminant of $P(b)$, which will be denoted by $Dis$.\\

 Note that \bea   Dis(b)=0 \Eq && b= (b_{1*},b_{2*}) = \mp \frac{1}{27 \beta ^2 c^2 \lambda  \b_z^2 \b_v^2 \b_y^2} \times \\ &&  \Big[2 \sqrt{\beta ^2 \b_z^2 \b_y^2 \left(\beta ^2 c^2 \b_y^2+\lambda  \b_v^2 \left(\lambda  (c-\b_z)^2-3 c \b_z\right)+c \lambda  \b_v \b_y (\b_z (\beta +3 \delta )-\beta  c)\right){}^3}\Big. \\
 && \Big.- \beta  \b_z \b_y \left(\lambda  (c-\b_z) \b_v+\beta  c \b_y\right) \times\Big.\\
 && \Big.\pr{ 2 \beta ^2 c^2 \b_y^2+\lambda  \b_v^2 \left(2 \lambda  (c-\b_z)^2-9 c \b_z\right)+c \lambda  \b_v \b_y (-5 \beta  c+5 \beta  \b_z+9 \delta  \b_z)}\Big]\eea

 \beR Biologically, the equilibrium $E_0$  a boundary  case in which the model is inappropriate. The equilibrium $E_K$ occurs when viro-therapy fails, and the tumor cell density reaches its carrying capacity in the long run. Lastly, $E_*$ represents a partial success of viro-therapy where healthy and infected tumor cells  coexist, and which may be  achieved  by viruses only, without help from  the immune system.

Finally, $E_{im}$ represents another possible coexistence, which requires help from  the immune system, and $E_\pm$ represent  equilibria which are exterior to the domain or unstable.
	\eeR

\ssec{The stability of $E_*$ when $\ep=0$\label{s:SEs}}

As already hinted by \eqr{sye}, the stability of $E_*$ is affected by the value of $y_e$.

\beL \lab{l:lst0} \cite[Prop. 6,7]{Tian17}
$E^*$ is locally stable if and only if $b\in (b_0,b_H)\cap (b_1,b_2)^c$, where $(b_1,b_2)$ is the interval on which {$y_*(b)>y_e$}.
\eeL

We provide now a proof reducing the problem to three dimensions which is considerably shorter than the original.

\prf
This result follows from the block diagonal structure of the Jacobian at $E^*$, see \eqr{J4s}, which yields  one eigenvalue proportional to $y_*-y_e$, and must of course be negative for local stability. Under this condition, stability is thus equivalent to that of the  remaining three-dimensional block, which {is identical to that in \cite{Tian11}} (unfortunately, that is not immediately  obvious, due to different notations).

The new condition $p(b)=y_*(b)/y_e=\frac{\delta  \lambda  \beta _z (\beta+\delta)(1-R_0)}{(b-1) \beta  c ((b-1) \beta +\delta  \lambda )}<1$  may be  explicitized with respect to $b$ into   {$b \notin (b_1  , b_2)$},
\beq \label{b12} b_{1,2}= \frac{2 \beta  c-c \delta  \lambda \mp \delta  \sqrt{\lambda } \sqrt{\lambda  \left(c-\beta _z\right){}^2-4 c \beta _z}+\delta  \lambda  \beta _z}{2 \beta  c}\eeq

In conclusion, the stability  domain is the    intersection of that in \cite{Tian11} with $b \notin (b_1  , b_2)$. \QED



\ssec{Stability of the interior equilibria and bifurcation diagrams
}

We illustrate now the results of \cite[Prop 6-8]{Tian17} via  bifurcation diagrams of $v$ and $x$ with respect to $b$ in a particular numeric case.

\figu{BiifT17}{Bifurcation diagram of system \eqref{xT} when $K=\ga =1$, $\epsilon=0$, and $\b_v =0.16$, $\b_y =0.48$,  $\lambda =0.36$, $\beta =0.11$, $\delta =0.2$, $\b_z=0.6$, $c=0.036$, which yields $y_e=0.06$, $b_0=2.81818$, $b_H=19.01210747136$, and $Dis=0 \Leftrightarrow b_{1*}=-0.00697038, \; \mbox{and}\; b_{2*}=10.2462$. The values of $b_1,b_2$ are $3.58676, 8.66779$. {Note that at these two values,} $E_*$ equals $E_{im}$ and $E_+,$ respectively, and that these points are at the yellow boundary of the domain of admissible values for $v$.}{1.2}

Since the variable of interest is $x$, we provide also a bifurcation diagram of $x$ with respect to $b$.

\figu{Bifx}{Bifurcation diagram of system \eqref{xT}, representing the $x$ of the equilibria points, as functions of $b$.  The last curve corresponds to the constraint $v\ge 0$.
 The parameters are  $K=\ga =1$, $\epsilon=0$, and $\b_v =0.16$, $\b_y =0.48$,  $\lambda =0.36$, $\beta =0.11$, $\delta =0.2$, $\b_z=0.6$, $c=0.036$.
}{1.2}


\beR Note that:
\BEN \im For a ``weak virus" with $b<b_0$, $E_K$ is the only stable equilibrium,
as expected from the fact that $E_K$ behaves essentially as the \DFE\ from mathematical epidemiology.
\im  At the first critical point  $b=b_0$ which corresponds to $R_0$, the ``stability relay" is passed  from $E_K$ to the ``virus only" equilibrium $E_*$,
precisely when this  enters the domain.

\im  As the virus becomes more efficient, $E_*$ becomes unstable, precisely at the point $b=b_1$ when the  fixed point $E_{im}$ which involves the immunity system enters the domain. This point carries
the ``stability relay"    until $b_2$.

\im As the efficiency  of the virus increases to $ b=b_2$, $E_*$ becomes stable again and we have  bistability, until $b_{2*}$. In this range, reaching a better  outcome  $E_*$ or a worse one $E_{im}$ depends on the boundary conditions, until $b_{2*}$.
  \im After $b_{2*}$, $E_{im}$ becomes unfeasible, and  $E_*$ remains the  only   stable equilibrium, until $b_{H}$.
  \im After  $b_H$, $E_*$  loses  again its stability, in favor of a limit cycle.
\EEN

\eeR

Let us discuss now   the stability of the interior equilibrium points, via the so-called Routh-Hurwitz-Lienard-Chipart-Schur-Cohn-Jury (RH) criteria \cite{Jury,Routh,Daud}, which
are formulated in terms of the coefficients of the characteristic polynomial $Det(\la  I_n-L)=\la^n+ a_{1} \la^{n-1}+...+  a_n$, and of certain Hurwitz determinants $H_i$ \cite[(15.22)]{Routh}.

In the fourth order case, the characteristic polynomial is
 $Det(J-z I_4)=z^4+ a_3 z^{3}+ a_2 z^2 + a_1 z + a_0 =z^4- Tr(J) z^{3}+z^2 M_2(J) - z M_3(J)+  Det(J)$,
where $M_2,M_3$ are the sums of the second and third order principal leading minors of the Jacobian $J$, respectively. The  Routh-Hurwitz  becomes \cite[pg. 137]{Routh}
    $$\bc Tr(J)<0,\ M_2 >0,\ M_3 <0,\ Det(J)>0,\\
0 <Tr(J) \pr{ M_2 M_3-Tr(J) Det(J)}-M_3^2.
\ec
$$

Pinpointing the domains of attraction associated with the two equilibrium points $E_*,E_{im}$} symbolically is quite challenging, but feasible in particular cuts see Figures \ref{f:map4} and \ref{fig:P22}. Note that in this case our analysis
may help with controlling the evolution of treatment, by privileging  the desired final tumor size.

Figure \ref{f:map4} depicts a bifurcation diagram with respect to $b, \b$.

\figu{map4}{The partition of the $(b,\b)$ plane into six regions, when $\b_v = 0.16, \b_y = 0.48, K =\gamma=1,  \La = 0.36, \delta= 0.2, s = 0.6, c =0.036$. {The region containing at least one attractor cycle is bounded below by the Hopf bifurcation curve $H_*(b,\b)=0$. Next follows a region where $E_*$ is stable, bounded below by the curve $\Delta(b,\b)=0$, and then the bistability region $b\in (b_2,b_{2*})$,  bounded below by the upper branch of $\frac{y_*}{y_e}=1$. In between the two branches of $\frac{y_*}{y_e}=1$ we have a region where  $E_{im}$ is stable and $E_*$ is unstable. Only $E_*$ is stable in the next region, bounded below by the transcritical bifurcation curve $R_0(b,\b)=1$. In the last region, $E_K$ is the only stable point}. The phase plots at the points $b_7$ and $b_8$ are illustrated in Figures \ref{fig:P22} and \ref{fig:PHI}.}{1.2}

\beR
  We {conjecture}, based on our numerical evidence, that it is impossible that $E_{im},E_+$  exchange stability, as suggested in \cite[Prop. 8]{Tian17}.

\eeR

\ssec{Time and phase plots illustrating bi-stability and a limit cycle, with $\ep=0$}

We provide now time and parametric plots illustrating these  more ``exotic" behaviours.

\sssec{Bi-stability in the interval $(b_2,b_{2*})$}
 The parameters are fixed as in \cite{Tian17}: $\b_v =0.16$, $\b_y =0.48$, $K=1$, $\gamma =1$, $\lambda =0.36$, $\beta =0.11$, $\delta =0.2$, $\b_z=0.6$, $c=0.036$, $y_e =0.06$.

 When $b=9.5 \in (b_2,b_{2*})$;

 we obtain that $E_*=(0.213904,0.0562055,2.38873,0)$ and\\ $E_{im}=(0.453156,0.06,1.59331,0.67437)$ with the eigenvalues\\
 $(-1.25014,-0.0251954\pm 0.21128 Im,-0.0022767)$ and\\
  $(-1.69595, -0.0714416 \pm 0.218669 Im,  -0.00574855)$, respectively, are the unique stable attractors,  as illustrated in Figure \ref{fig:P22};

\begin{figure}[H] 
    \centering
    \begin{subfigure}[a]{0.48\textwidth}
        \includegraphics[width=\textwidth]{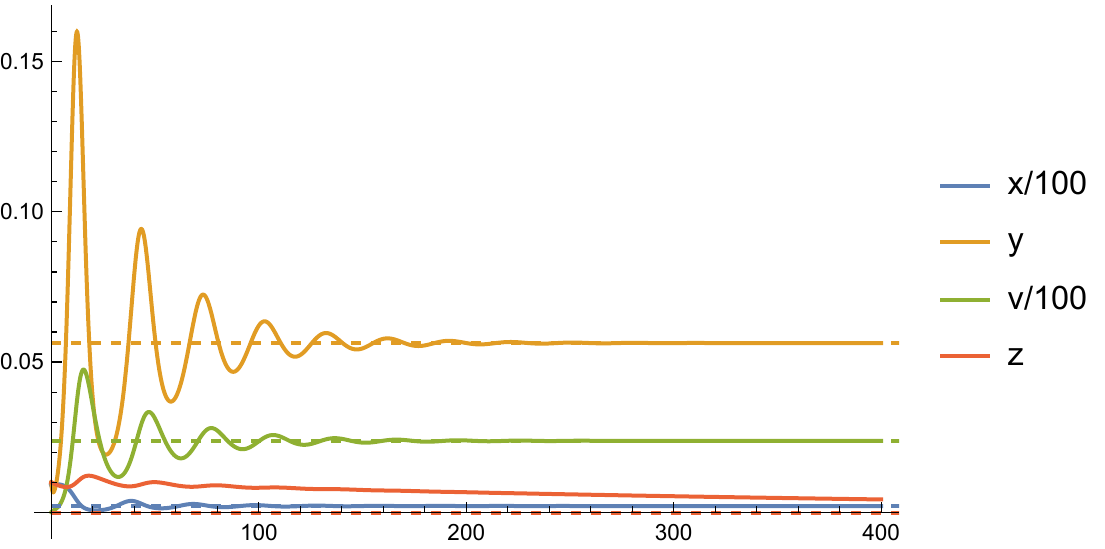}
        \caption{Time plot of the four components indicates the convergence towards the attractor $E_*$. }
    \end{subfigure}
     ~
     \begin{subfigure}[a]{0.48\textwidth}
        \includegraphics[width=\textwidth]{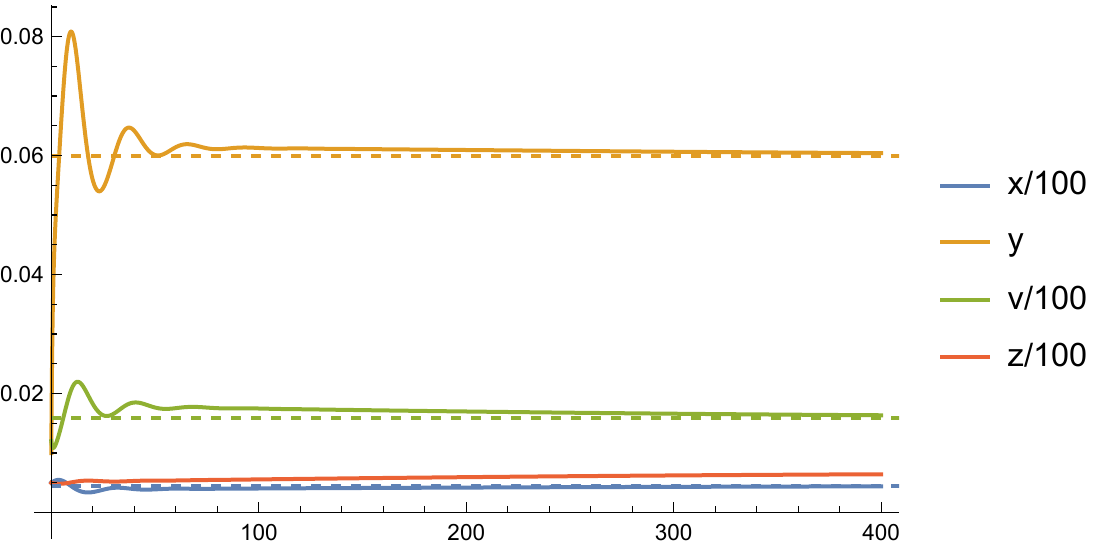}
        \caption{Time plot of the four components indicates the convergence towards the attractor $E_{im}$. with $x_0=z_0=0.5, y=0.01$ and $v_0=1.2$.}
    \end{subfigure}
     ~
        \begin{subfigure}[b]{0.48\textwidth}
        \includegraphics[width=\textwidth]{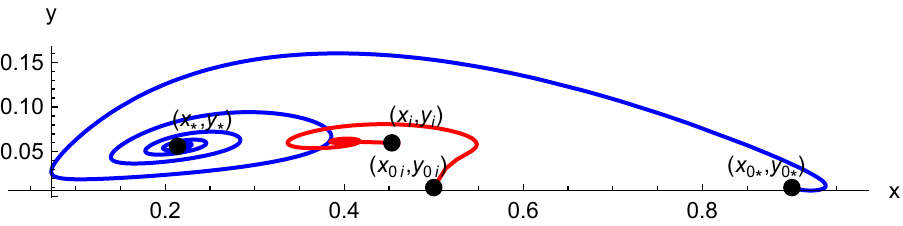}
        \caption{$(x,y)$ parametric plots illustrating the convergence towards the fixed point $E_{im}$ (in red) and towards $E_*$ (in blue) with $t_f=1900$ and $t_f=200$, respectively, when {$b=10$} and $(x_{0*},y_{0*})=(0.9,0.01)$ and $(x_{0i},y_{0i})=(0.5,0.01)$.}
        \end{subfigure}
       \caption{Plots of the evolution of the dynamics in time, and illustration of the convergence towards $E_*$ and $E_{im}$ when  $b=9.5$.
     \label{fig:P22}}
    \end{figure}

\sssec{
 Limit cycle in the interval $(b_H,b_\I)$}

 When $b=23 >b_{H}$,  there is no stable attractor --see Figure \ref{fig:PHI};
\begin{figure}[H] 
    \centering
    \begin{subfigure}[a]{0.48\textwidth}
        \includegraphics[width=\textwidth]{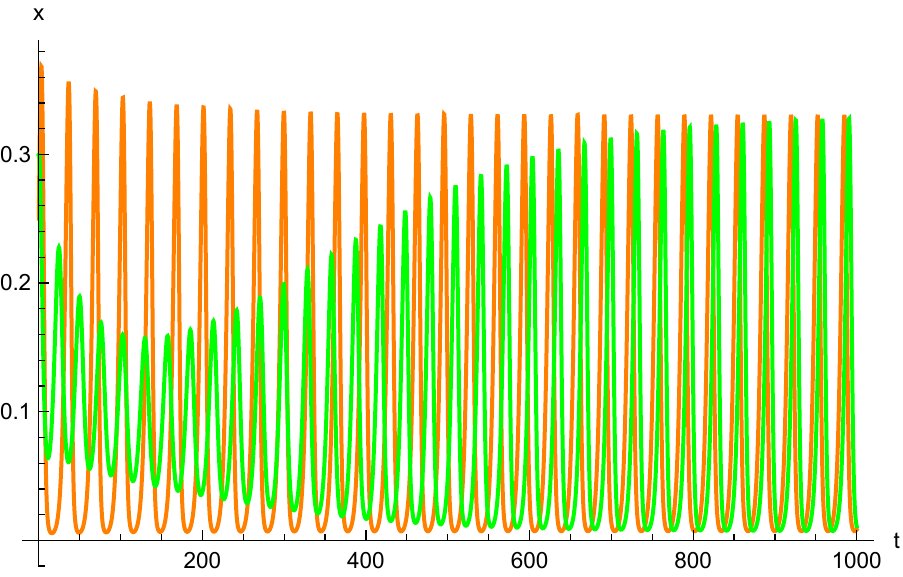}
        \caption{Time plots of $x$ with different initial values; $x(0)=\pr{0.3,0.9}$. They indicate the absence of stable attractor. }
    \end{subfigure}
    ~
    \begin{subfigure}[a]{0.48\textwidth}
        \includegraphics[width=\textwidth]{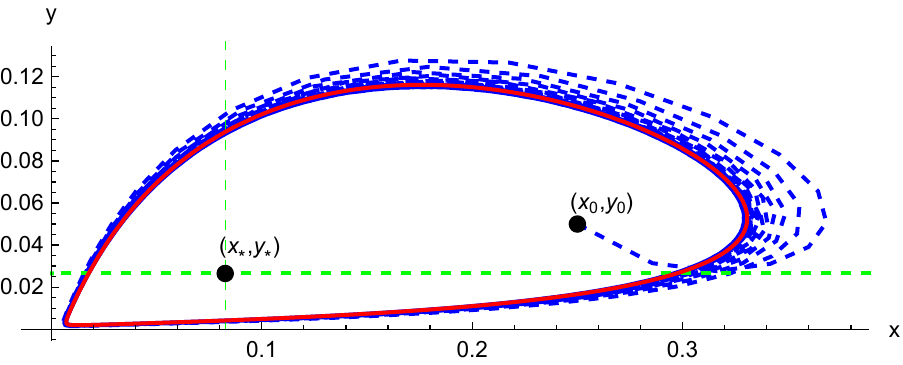}
        \caption{ $(x,y)$ parametric Plot  illustrating the existence of limit cycle with period $32.613$, and  $(x_{0},y_{0})=(0.25,0.05),$ and $(x_*,y_*)=(0.0826446,  0.0265046)$, with eigenvalues $(-1.24715,0.00415397 \pm 0.230094 Im,-0.0200972)$ }
    \end{subfigure}
 \caption{$x-$time plots and display of the parametric plots when $b=23$.
     \label{fig:PHI}}
    \end{figure}

\section{The four-dimensional viro-therapy  model of \cite{Guo}, with logistic growth \label{s:Eric}}

In this section, we turn to the special case of  \eqref{viroeq} when $\ep=1$, that is,
\begin{equation}\label{xG1}
	\begin{aligned}
		\frac{dx}{dt} & = \lambda x\left(1-\frac{x+y}{K}\right) -\beta xv \\
		\frac{dy}{dt} & = \beta xv-\b_y yz-\ga y                          \\
		\frac{dv}{dt} & = b\ga y -\beta xv -\b_v  vz -\de v               \\
		\frac{dz}{dt} & = z(sy-cz).
	\end{aligned}
\end{equation}

\subsection{Interior equilibria}
 \beT
 A) There are at most three  equilibrium points belonging to the interior of $\R_+^4$;

 B)Their $y$ component is a  zero of the third degree  polynomial $Q(y)$  defined in \eqr{Q},
and lies within
$$\left(0,y_b:=\fr{c\ga  (b-1)}{\mu \b_z}\right).$$

\eeT

\begin{proof}

A) is immediate.

B) From the last equation in \eqref{viroeq}, $z=y\frac{\b_z}{c}$.
Adding the second and third equation in  \eqref{viroeq} yields
$$\ga   y(b-1)  -\b_y  yz=v(\b_v  z+\de ),$$
we find that $v=y\frac{f(y)}{g(y)},$
where we put \beq \bc f(y)=c\ga  (b-1)-y\b_y  s\\g(y)=\b_v  sy+\de  c.\ec
\eeq

From the second  equation in \eqref{viroeq}  $x=\frac{h(y)g(y)}{\beta f(y)}$, where $h(y)= y\frac{s\b_y }{c}+\ga $, and ${xv=\frac{h(y)g(y)}{\beta f(y)} y\frac{f(y)}{g(y)}=y\frac{h(y)}{\b}}$.

 From \eqref{viroeq},
 $y$ must be a zero of the rational function
 \beq
\lab{P} & \lambda (1-\frac{y}{K})- \fr\lambda K\frac{h(y) g(y)}{\beta  f(y)}- y \frac{\beta f(y)}{g(y)}:=  P(y):=\fr{Q(y)}{f(y)g(y)},\eeq
and hence of its numerator, which is the third degree  polynomial
\begin{equation} \lab{Q}Q(y)=\la f(y)g(y)(1-\frac{y}{K})-\frac{\la h(y)g(y)^2}{\beta K}-\beta yf(y)^2.
\end{equation}

 For $x,v$ and $z$ to be positive, $y$ must lie in $(0,y_b)$.

\end{proof}

The formulas for the coefficients of $Q(y)=a_3y^3+a_2y^2+a_1y+a_0$ are:
\begin{itemize}
\item $a_3=(\frac{\b_y \b_z^2}{\beta K c})(\beta \b_v  c\la-\b_v ^2s\la-K\beta^2\b_y c)$
\item $a_2=(\frac{\b_z}{\beta K})(\beta \b_y cr \de  +2K\beta^2 \b_y c \ga   (b-1)-\beta K\b_v  \b_y s\la-\beta\b_v  c\ga  (b-1)\la-2\b_v \b_y s\la\de -\b_v ^2s\ga   r)$
\item $a_1=(\frac{c}{K\beta})(\beta K\b_v  s\ga  (b-1)\la-\beta K\b_y \la\de -\beta c\ga  (b-1)\la\de -\b_y s\la\de ^2-2\b_v  s\ga   \la\de  -\beta^2Kc\ga  ^2(b-1)^2)$
\item $a_0=\frac{\la c^2\ga  \de }{\beta K}(\beta K(b-1)-\de ).$
\end{itemize}
\begin{teo}  Suppose $R_0>1$. Then, there is at least one interior equilibrium point.
\BEN \im
 If  furthermore $a_3>0 $ (large $\b$), then there is exactly only one interior equilibrium point.
 \im If   $ a_3<0$ there can be 1, 2 or 3 interior equilibrium points, depending whether the discriminant is negative, zero, or positive.  These interior points (when they exist) will be denoted  by $E_+,E_-,E_{im}$, corresponding to the highest, lowest and intermediate values of $y$, respectively.
 \EEN

\end{teo}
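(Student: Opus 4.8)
The plan is to count the zeros of the cubic $Q(y)=a_3y^3+a_2y^2+a_1y+a_0$ inside the feasibility window $(0,y_b)$ established in the previous theorem, using only the signs of $Q$ at the endpoints and at $\pm\I$, together with the fact that a real cubic has at most three real roots.

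First I would record the two boundary signs. From $a_0=\fr{\la c^2\ga\de}{\b K}(\b K(b-1)-\de)$ and $R_0=\fr{\b K}{\b K+\de}b$ one gets $R_0>1\Eq \b K(b-1)>\de \Eq a_0>0$, so $Q(0)=a_0>0$. At the other end, $y_b$ is exactly the zero of $f$ (this is where $v=y f(y)/g(y)$ vanishes), so substituting $f(y_b)=0$ into \eqref{Q} annihilates its first and third terms and leaves $Q(y_b)=-\fr{\la h(y_b) g(y_b)^2}{\b K}$, which is strictly negative because $h(y_b)>\ga>0$ and $g(y_b)>\de c>0$. The sign change $Q(0)>0>Q(y_b)$ and the intermediate value theorem then yield at least one root in $(0,y_b)$, i.e. at least one interior equilibrium.

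For part 1, when $a_3>0$ the signs of $Q$ at $-\I,0,y_b,+\I$ are $-,+,-,+$ (the outer two coming from the leading term $a_3y^3$). These three sign changes force one root in each of $(-\I,0)$, $(0,y_b)$ and $(y_b,+\I)$; since the cubic has at most three real roots, there is exactly one in each, hence exactly one interior equilibrium.

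For part 2, when $a_3<0$ the sign pattern at $-\I,0,y_b,+\I$ is $+,+,-,-$: the guaranteed root in $(0,y_b)$ persists, while any further real roots must occur outside this interval and in pairs. The classical cubic trichotomy then governs the total number of real roots through the discriminant: for $Dis<0$ there is a single real root, necessarily the one already located in $(0,y_b)$, giving exactly one interior equilibrium; for $Dis=0$ a repeated root appears, and for $Dis>0$ there are three distinct real roots, to be ordered and labelled $E_-,E_{im},E_+$. The main obstacle is to confirm that in these last two cases the extra real roots also lie in $(0,y_b)$ rather than in $(-\I,0)\cup(y_b,\I)$. Part of this is clean: on $(y_b,K)$ every term of \eqref{Q} is negative (there $f<0$, $1-\fr yK>0$, and $g,h>0$), so $Q<0$ and no root can sit just above $y_b$; moreover no root can cross the fixed boundaries $y=0$ or $y=y_b$ as parameters move, since $Q(0)=a_0\ne0$ and $Q(y_b)<0$ strictly, so the count in $(0,y_b)$ can only change when two roots merge at $Dis=0$. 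Pinning down that this merging occurs inside $(0,y_b)$ (equivalently, excluding spurious negative roots and roots beyond $K$) is the delicate point, which I would settle either by additional sign bookkeeping on $a_1,a_2$ or, in line with the computational methodology of the paper, by symbolic verification in the accompanying notebook.
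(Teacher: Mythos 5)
Your argument is correct and follows essentially the same route as the paper: existence comes from the endpoint signs $Q(0)=a_0>0\Leftrightarrow R_0>1$ and $Q(y_b)=-\lambda h(y_b)g(y_b)^2/(\beta K)<0$ together with the intermediate value theorem, and uniqueness for $a_3>0$ comes from locating the other two real roots outside $(0,y_b)$ (you place one in $(-\infty,0)$ via the sign of the leading term at $-\infty$, while the paper places one in $(y_b,\infty)$ and invokes Descartes' rule --- equivalent bookkeeping, since both cap the count at three real roots). The localization gap you honestly flag in part 2 (whether the additional roots appearing when the discriminant is nonnegative actually lie in $(0,y_b)$ rather than in $(-\infty,0)\cup(y_b,\infty)$) is likewise left open by the paper, whose proof of that part merely verifies that three sign changes in the coefficients are possible, so your treatment is, if anything, the more careful of the two.
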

\begin{proof}
Observe that $P(0)=\la(1-\frac{\de }{\beta K(b-1)})$ and $P(0)>0$ if and only if $\de <\beta K(b-1)$, which holds from the assumption. And
$$\lim_{y\rightarrow y_b^-}P(y)=-\infty$$
Thus, by continuity, $P(y)$ has at least one root $y_0$ in $(0,y_b)$.

Alternatively, note that
  $Q(y_b)=-\frac{\la h(y_b)g(y_b)^2}{\beta K}<0$ (since $g(y)>0$ and $h(y)>0$ for all $y>0$), and that  $Q(0) =\frac{\la c^2\ga  \de }{\beta K}(\beta K(b-1)-\de )> 0$ when $R_0 >1$.
\BEN \im
Recall that $a_0>0$. Descartes' rule of signs states that if there are $k$ sign changes in the coefficients of a polynomial, ordered with exponent's decreasing order, then the number of positive real roots (counting multiplicities) equals to $k$ or is less than this number by a positive even integer.
\\ If $a_0>0$ and $a_3>0$ then there can only be 0 or 2 changes of signs. Theorem \ref{Viro_teo_413} guarantees that there cannot be 0 changes, so there are 2 changes of signs.
\\ Now $\lim_{y\rightarrow\infty}Q(y)=+\infty$ and $Q\left(y_b\right)=-\frac{r\ga   b}{K\beta}g\left(y_b\right)^2<0$ imply that there is at least one root of $Q(y)$ in $(y_b,+\infty)$. Thus there is only one root of $P(y)$ in $(0,y_b)$, otherwise there would be at least 3 changes of signs and this cannot be possible.
\im
If   $R_0>1$ and $a_3<0$ there can only be 1 or 3 changes of signs. If there are 3 sign changes then there can be 1, 2 or 3 roots on the interval $(0,y_b)$.  This case can only happen when $a_1<0,\ a_2>0,\ a_3<0$. Notice that $a_1\rightarrow-\infty$, $a_2\rightarrow+\infty$, $a_3\rightarrow-\infty$ as $\beta\rightarrow+\infty$, so the previous inequalities are indeed possible.
\EEN
\end{proof}

\beR
\BEN
\im If $R_0<1$  there are no  interior equilibrium points.

\im For the model of \cite{Guo} with $K=\I$, $Q(y)$ factors as the product of $y-y_b$
and a second order polynomial.
\EEN
\eeR

\subsection{Stability of interior equilibria and bifurcation diagrams}

Now the Jacobian matrix evaluated on a interior equilibrium point $E=(x ,y ,v ,z)$ is given by
$$J(E)=\left( \begin{matrix}
\lambda -\frac{\lambda  (2 x+y)}{K}-v \beta   & -\frac{\la x }{K} & -\beta x  & 0\\
\beta v  & -\b_y z -\ga   & \beta x  & -\b_y y  \\
-\beta v  & b\ga   & -\beta x -\b_v  z  -\de   & -\b_v  y \\
0 & \b_z z  & 0 & \b_z y-2 c z
\end{matrix}\right).$$

Its characteristic polynomial has a complicated form, its coefficients have been derived with the help of Mathematica, and the expression of the determinant is also long and complex, see  the end of the first cell in \cite{codeM41}.

When the interior point $E$ is either $E_{im}$, $E_+$ or $E_-$, the trace is given by
\begin{align*}
	 & -\frac{\lambda  \left(\frac{2 \left(\frac{\b_z y \b_y}{c}+\gamma \right) \left(c \delta +s y \b_v\right)}{\beta  \left((b-1) c \gamma -s y \b_y\right)}+y\right)}{K}+\frac{\beta  y \left(c (\gamma -b \gamma )+s y \b_y\right)}{c \delta +s y \b_v}-\frac{\left(\frac{\b_z y \b_y}{c}+\gamma \right) \left(c \delta +s y \b_v\right)}{(b-1) c \gamma -s y \b_y}\\
	 & - \frac{\b_z y \b_v}{c} - \frac{\b_z y \b_y}{c}-\gamma -\delta +\lambda -s y,
\end{align*}
and is negative for $b\geq 1$.


However, the check of the sum of the second and third order principal leading minors of the Jacobian at an interior equilibrium, the positivity of the determinant, and of the additional Hurwitz criterion, seemed to exceed our machine power, --see \cite[Subsection Ep1-2)]{codeM41}.\\


We show in Figure \ref{f:bifur-eps1} some bifurcation diagrams of the $y$ component  with respect to $b$; for the corresponding stability analysis, see  \cite[Subsection Ep1-3)]{codeM41}.
\figu{EriB}{Bifurcation diagrams of the fixed points corresponding to the dynamics in \eqr{viroeq},  with respect to  $b$  and the coordinate $y$ where  $y_*=\frac{\delta  \lambda  ((b-1) \beta  K-\delta )}{(b-1) \beta  ((b-1) \beta  \gamma  K+\delta  \lambda )}$, which is $0$ when $b=b_0=1.02299$, as well as for $E_+$, with $\epsilon=1$ and $K=1$. Here, $\beta = \frac{87}{2}$, $\lambda = 1$, $\gamma = \frac{1}{128}$, $\delta=1/2$, $\mu = 1$, $\b_v= 1$, $\b_z= 1$, $c= 1$, which yields the discriminant roots $b_{1*}=29.361$, $b_{2*}=45.9232$, and the point of intersection of $y_b$ and $y_*$ is $b_{b*}=14.0011$. \label{f:bifur-eps1}}{1}

\figu{Bif1x}{Bifurcation diagrams of the fixed points corresponding to the dynamics in \eqr{viroeq},  with respect to  $b$  and the coordinate $x$ when $\epsilon=1$ and $K=1$. Here, $\beta = \frac{87}{2}$, $\lambda = 1$, $\gamma = \frac{1}{128}$, $\delta=1/2$, $\mu = 1$, $\b_v= 1$, $\b_z= 1$, $c= 1$. When $b>b_0,$ $x_*$ is very small, for example $x_*(15)=0.000821018$. After $b_{2*}$, $x_*$ is very small, for example $x_-(47)=0.0017057$.
\label{f:bifx-ep1}}{1}

With the parameters set as $K=1$, $\beta = 87/2$, $\lambda = 1$, $\gamma = 1/128$, $\delta=1/2$, $\b_y  = 1$, $\b_v= 1$, $\b_z= 1$, $c= 1$ and taking $b$ as the bifurcation parameter, we can use MatCont to verify that a Hopf bifurcation occurs at $b_H=29.903443$. The corresponding first Lyapunov coefficient is 0.818234. The bifurcation diagram in Figure \ref{f:LC-diag} reveals that there are two limit point cycles: one at $b_{LC1}=29.903500$ and one at $b_{LC2}=30.854713$. A stable limit cycle bifurcates from the equilibrium $E_{im}$ and exists for $b>b_H$. The MatCont code for the bifurcation analysis can be found at \cite{ghub2022fourdim}.

\figu{LC-diag}{{Bifurcation diagram for the dynamics of system \eqref{xG1} with respect to $b$ and the $(x,y)$ coordinates, showing the size of limit cycles as $b$ varies. Two limit point cycles (labeled as LPC) are detected: one at $b=29.903500$ and one at $b=30.854713$.}}{0.7}

\subsection{Time and phase plots illustrating different behaviors, with $\ep=1$}

In the following, we use the  initial conditions $x_0=0.9$, $y_0=v_0=z_0=0.01$, and the fixed parameters $K=1$, $\beta = 87/2$, $\lambda = 1$, $\gamma = 1/128$, $\delta=1/2$, $\b_y  = 1$, $\b_v= 1$, $\b_z= 1$, $c= 1$ to illustrate via time plots of the four components and parametric plots the different dynamics of model \eqref{xG1} as $b$ is varied.

\subsubsection{Stability of $E_+$ in the interval $(b_0,b_H)$}

\BEN

	\item When $b=27 \in (b_0,b_{1*})$, the unique stable attractor within the domain is $E_+=(0.746349,0.198681,0.001264,0.198681)$ with eigenvalues $(-33.4248, -0.6914, \linebreak[1] -0.1001\pm 0.1725i)$, and there are no other interior equilibria: see Figure \ref{fig:PG-01b} below.

\begin{figure}[H] 
	\centering
	\begin{subfigure}[a]{0.48\textwidth}
		\includegraphics[width=\textwidth]{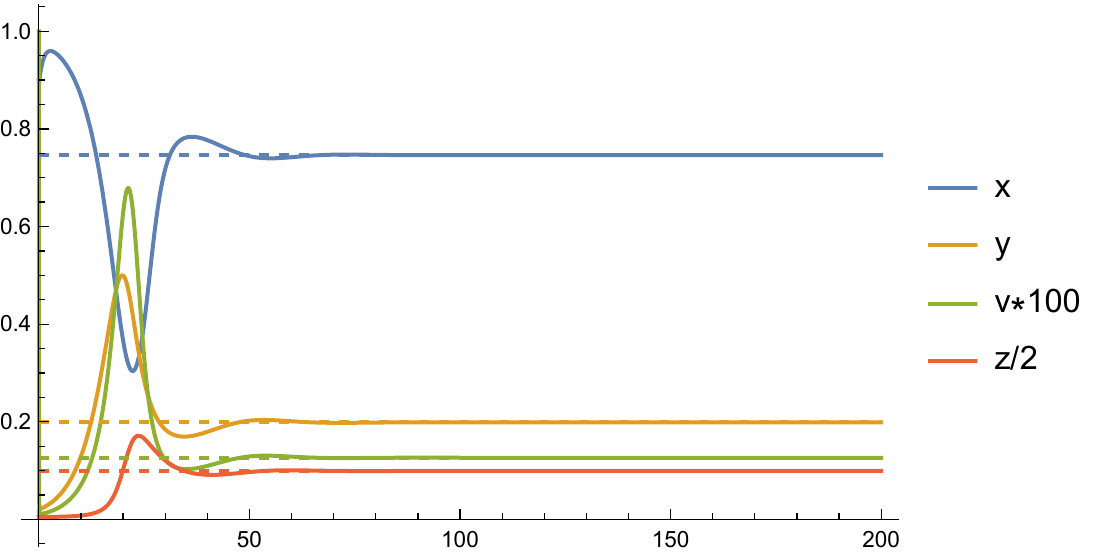}
		\caption{Plot of the dynamics in time converging to the horizontal dashed lines, corresponding to the coordinates of the attractor $E_+=(0.746349,0.198681,0.001264,0.198681)$.}
	\end{subfigure}
	~
	\begin{subfigure}[f]{0.48\textwidth}
		\includegraphics[width=\textwidth]{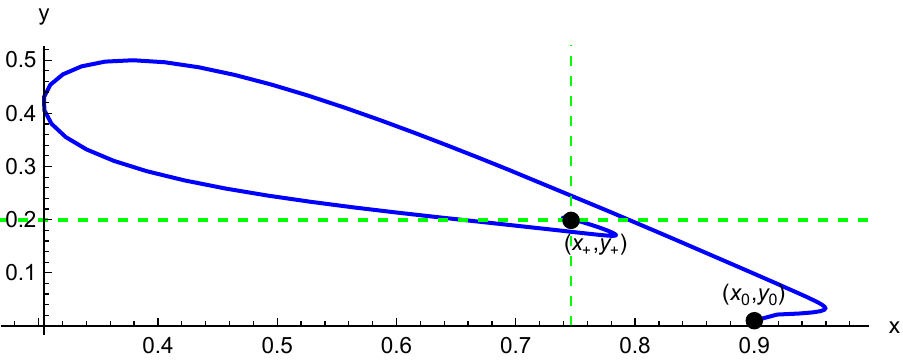}
		\caption{$(x,y)$ parametric plot corresponding to $E_+$ with $t_f=400$.}
	\end{subfigure}
	\caption{Plots of the evolution of the dynamics in time, and a parametric plot corresponding to the attractor $E_+$ when $b=27$.}
	\label{fig:PG-01b}
\end{figure}

	\item When $b=29.5 \in (b_{1*},b_H)$, there are three interior equilibria: $E_+ = (0.713936, \linebreak[1] 0.217452,0.001577, 0.217452)$, $E_{im} = (0.018264, 0.121499, 0.019776, 0.121499)$ and $E_- = (0.011907, \linebreak[1] 0.099053, 0.020438, 0.099053$ with eigenvalues $(-32.0654, \linebreak[1] -0.6453, -0.1098\pm 0.1890i)$, $(-1.9185, 0.1893, 0.0221\pm 0.0654i)$ and $(-1.5443, \linebreak[1] 0.1167\pm 0.1115i, -0.0238)$, respectively. The unique stable attractor is $E_+$, see Figure \ref{fig:PG-02}.

\begin{figure}[H] 
	\centering
	\begin{subfigure}[a]{0.48\textwidth}
		\includegraphics[width=\textwidth]{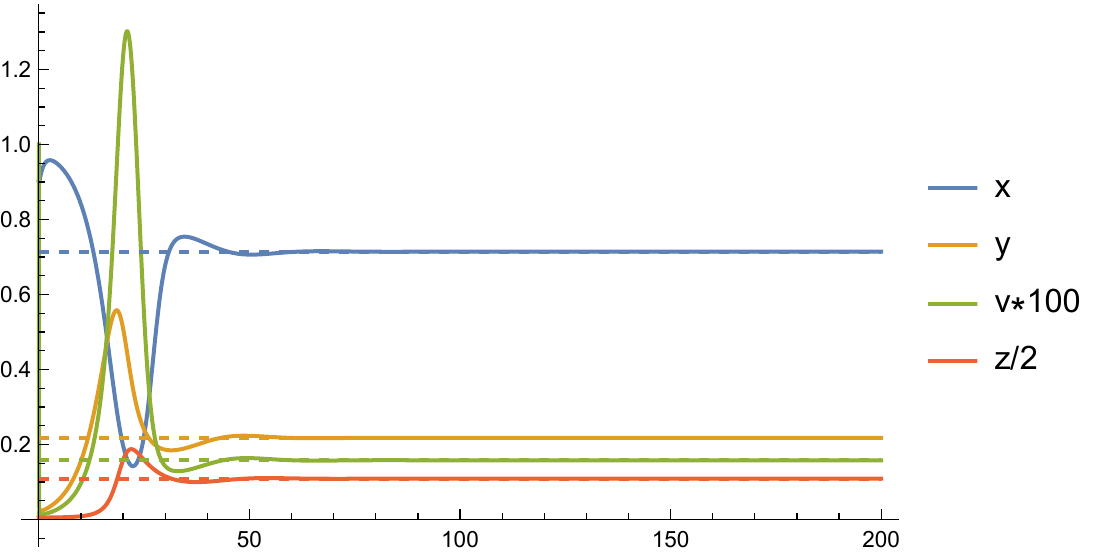}
		\caption{Plot of the dynamics in time converging to the horizontal dashed lines, corresponding to the coordinates of the attractor $E_+=(0.713936, 0.217452,0.001577, 0.217452)$.}
	\end{subfigure}
	~
	\begin{subfigure}[f]{0.48\textwidth}
		\includegraphics[width=\textwidth]{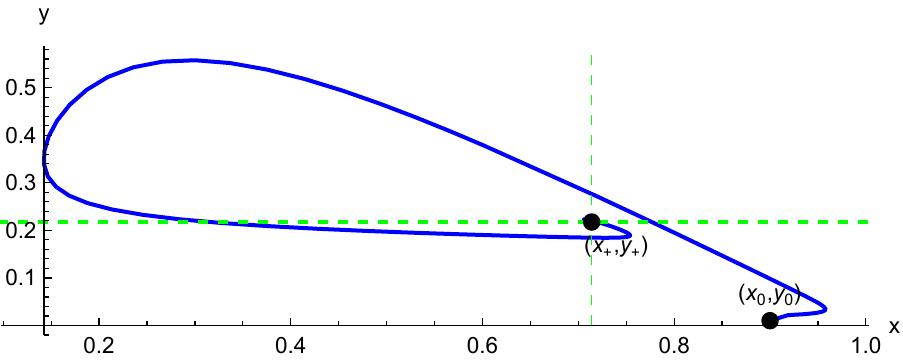}
		\caption{$(x,y)$ parametric plot corresponding to $E_+$ with $t_f=400$.}
	\end{subfigure}
	\caption{Plots of the evolution of the dynamics in time, and a parametric plot corresponding to the attractor $E_+$ when $b=29.5$.}
	\label{fig:PG-02}
\end{figure}

\EEN

\subsubsection{Bi-stability and limit cycle in the interval $(b_H,b_{2*})$}

When $b=42 \in (b_H,b_{2*})$, there exist three interior equilibria: $E_+ = (0.494238, \linebreak[1] 0.308421, 0.004537,0.308421)$, $E_{im} = (0.145387, 0.284118, 0.0131148, \linebreak[1] 0.284118)$ and $E_- = (0.002285, 0.042969, 0.021948, 0.042969)$ with eigenvalues $(-22.81, -0.1575\pm 0.2758i, -0.3016)$, $(-7.8605, -0.1167\pm 0.2540i, 0.2641)$ and $(-0.8425, 0.0687\pm 0.1676i, \linebreak[1] -0.0333)$, respectively. In this case, there are two stable attractors: the equilibrium $E_+$ and a stable limit cycle, see Figure \ref{f:ppG-2LC}.

\figu{ppG-2LC}{A two-dimensional phase space parametric picture of two trajectories, with initial values $(x_0,y_0,v_0,z_0)=(0.9,0.01,0.01,0.01)$ and $(x_0',y_0',v_0',z_0')=(0.05,0.05,0.0043,0.1954)$, and the same value $b=42$. The blue trajectory converges to a stable limit cycle, and the red trajectory converges to the interior equilibrium $E_+$.}{1}

\subsubsection{Chaotic behavior in the interval $(b_{2*},b_{\I})$}

When $b=50 > b_{2*}$, the only interior equilibrium is $E_-=(0.001469, 0.033937,\linebreak[1] 0.022175, 0.033937)$ with eigenvalues $(-0.7580, 0.0557\pm 0.1632i, \linebreak[1] -0.0284)$, and there is no stable attractor, see Figure \ref{fig:PG-03}.

\begin{figure}[H] 
	\centering
	\begin{subfigure}[a]{0.5\textwidth}
		\includegraphics[width=\textwidth]{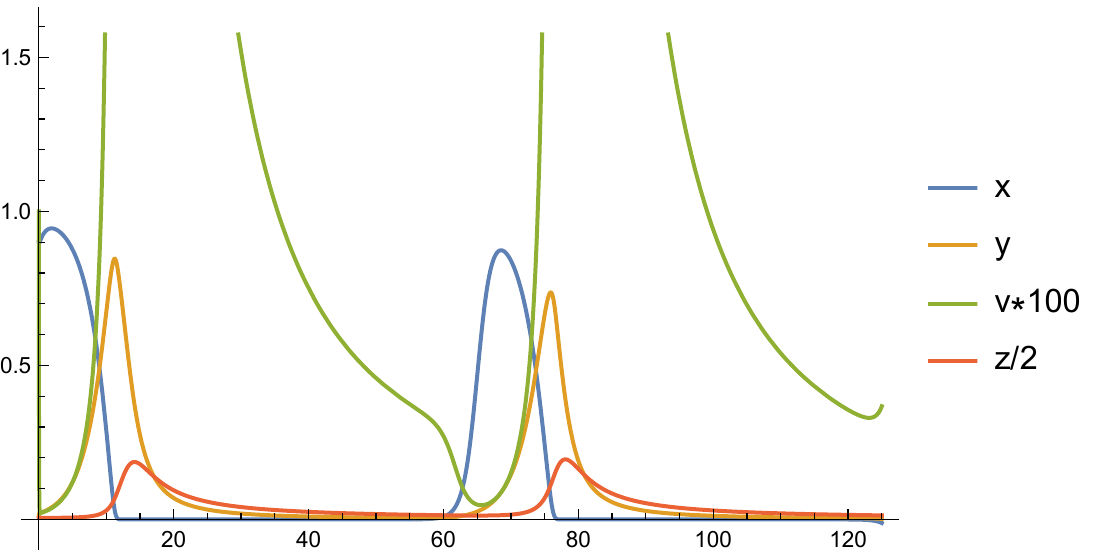}
		\caption{Plot of the evolution of the dynamics in time. Here, there are no stable interior equilibria.}
	\end{subfigure}
	~
	\begin{subfigure}[f]{0.4\textwidth}
		\includegraphics[width=\textwidth]{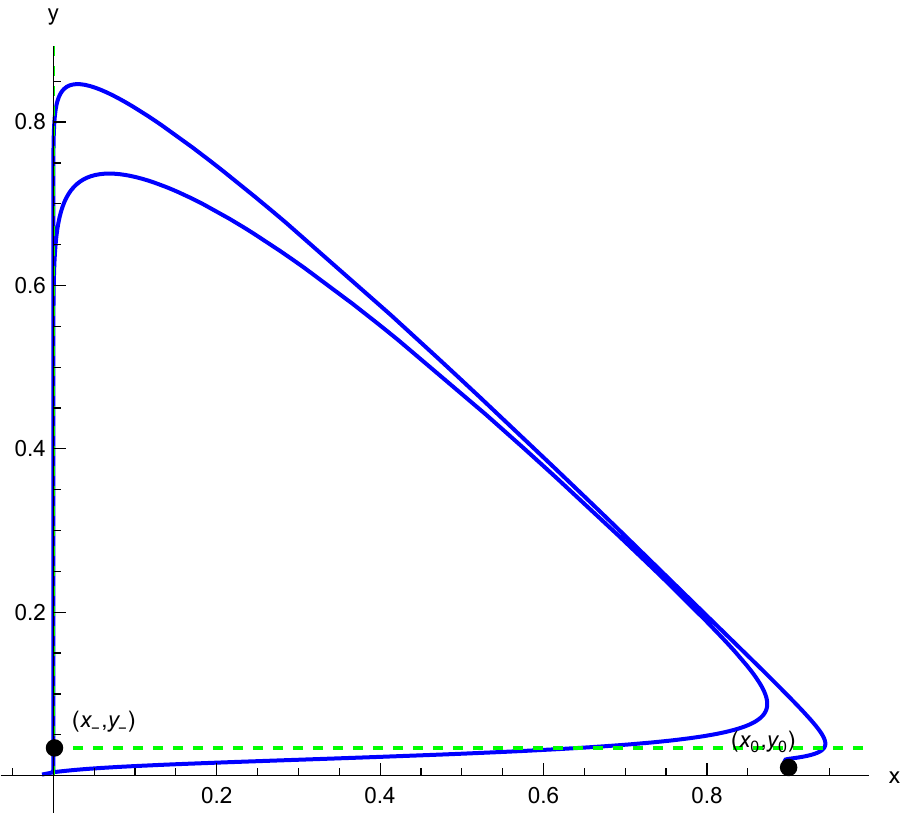}
		\caption{$(x,y)$ parametric plot when there is no stable attractor with $(x_0,y_0)=(0.9,0.01)$, $(x_-,y_-)=(0.001469, 0.033937)$, and $t_f=125$.}
	\end{subfigure}
	\caption{Plots of the evolution of the dynamics in time, and a parametric plot when $b=50$.}
	\label{fig:PG-03}
\end{figure}

\section{Conclusions}

In this study, we revisited the three-dimensional oncolytic virotherapy model of \cite{Tian11} and the four-dimensional model of \cite{Tian17}, and we provided some new results obtained with the aid of Mathematica. Furthermore, we proposed a novel model with virotherapy and immunity that generalizes some of the previous works and established several results on the equilibrium points of this model. The use of electronic notebooks and software such as Mathematica and Matcont allowed us to illustrate the stability dynamics of the model and show the existence of stable limit cycles for certain sets of parameter values, which might pave the way for further research in this area.

\section*{Acknowledgments}

This article was supported in part by Mexican SNI under CVU 15284.

\bibliographystyle{apalike}
\bibliography{ref}
\end{document}